\pgfplotsset{compat=1.11}
\newcommand{\ltp}{{\mathcal{G}}}
\newcommand{\ltph}{{\mathcal{H}}}
\newcommand{\ltig}{{{G}}}
\newcommand{\ltih}{{{H}}}
\def\serkan#1{\textcolor{black}{{#1}}}
\def\caleb#1{\textcolor{black}{{#1}}}
\def\chris#1{\textcolor{black}{{#1}}}
\def\serkanlast#1{\textcolor{black}{{#1}}}
\newcommand{\sys}[4]{\ensuremath{\left[ \begin{array}{c|c} #1 & #2 \\ \hline #3 & #4 \end{array} \right ]}}
\theoremstyle{plain}
\newtheorem{theorem}{Theorem}[section]
\newtheorem{corollary}[theorem]{Corollary}
\newtheorem{lemma}[theorem]{Lemma}
\theoremstyle{definition}
\newtheorem{definition}[theorem]{Definition}
\theoremstyle{remark}
\begin{document}

\title{Linear time-periodic dynamical systems: An $\mathsf{H}_2$ analysis and a model reduction framework}

\author{
\name{C.C. Magruder\textsuperscript{a}$^{\ast}$\thanks{$^\ast$Corresponding author. Email: cm47@rice.edu},
 S. Gugercin\textsuperscript{b} and C.A. Beattie\textsuperscript{b}}
\affil{
	\textsuperscript{a}Department of Computational and Applied Mathematics, Rice University,\\ 6100 Main St. - MS 134, Houston, TX, 77005-1892, USA;\\
	\textsuperscript{b}Department of Mathematics, Virginia Tech,\\ 460 McBryde Hall, 225 Stanger Street
Blacksburg, VA 24061-0123 }
\received{Original Submission June 2016, Revision received June 2017}
}

\maketitle

\begin{abstract}
%
%
Linear time-periodic (\textsf{LTP}) dynamical systems frequently appear in the modeling of phenomena related to fluid dynamics, electronic circuits, and structural mechanics via linearization centered around known periodic orbits of nonlinear models. Such \textsf{LTP} systems can reach orders that make repeated simulation or other necessary analysis prohibitive, motivating the need for model reduction.

We develop here an algorithmic framework for constructing reduced models that \caleb{retains} the linear time-periodic structure of the original \textsf{LTP} system. Our approach generalizes optimal approaches that have been established previously for linear time-invariant (\textsf{LTI}) model reduction problems.  We employ an extension of the usual $\H2$ Hardy space defined for the \textsf{LTI} setting to time-periodic systems and within this broader framework develop an a posteriori error bound expressible in terms of related \textsf{LTI} systems.  Optimization of this bound motivates our algorithm. We illustrate the success of our method on two numerical examples.

\end{abstract}

\begin{keywords}
	time-varying systems, linear time-periodic dynamical systems, model reduction, $\H2$ norm
\end{keywords}

\begin{classcode}
	37M05, 37M99, 93A15, 65K99, 93C05
\end{classcode}

\section{Introduction}
	We consider linear continuous-time-periodic (\textsf{\textsf{LTP}}) single-input/single-output (\textsf{SISO}) dynamical systems realized through a state-space representation,
	\begin{align}
		\label{eq:FOM}
		\ltp:\left \{ \begin{aligned}
			\dot{\x}(t) &= \A(t)\x(t) + \b(t)u(t)\\
			y(t) &= \c^T(t)\x(t)
		\end{aligned} \right .
		\left(\mbox{abbreviated }
		\ltp = \left[\begin{array}{c|c}\A(t) & \b(t)\\ \hline \c^T(t) &  \end{array}\right]\right),
	\end{align}
	where $\A(t) \in \R^{n \times n}$ and $\b(t),\c(t)\in\R^{n}$ are $T$-periodic, 
	with $\A(t) = \A(t+T)$, $\b(t) = \b(t+T)$ and $\c(t) = \c(t+T)$ for some fixed $T>0$ and we assume that $\x(0)=0$.
	Let $\mathsf L_2$ denote the vector space of real-valued square integrable functions (having ``bounded energy").
	\begin{align*}
		 \mathsf L_2 := \left \{ u : \int_0^\infty |u(t)|^2 dt < \infty \right \}.
	\end{align*}
	We assume that the system $\ltp$ in (\ref{eq:FOM}) is causal and is a bounded, linear mapping from $\mathsf L_2$-inputs to $\mathsf L_2$-outputs, $\ltp: \mathsf L_2 \to \mathsf L_2$.

	For any given order $r\ll n$, our goal is to find a \emph{reduced-order model},
	\begin{align}
		\label{eq:ROM}
		\widetilde \ltp:\left \{ \begin{aligned}
			\dot{\widetilde \x}(t) &= \widetilde{\A}(t)\widetilde{\x}(t) + \widetilde{\b}(t)u(t)\\
			\widetilde y(t) &= \widetilde{\c}^T(t)\widetilde{\x}(t)
		\end{aligned} \right .
\left(\mbox{abbreviated }		
\tilde \ltp = \left [\begin{array}{c|c}\tilde \A(t) & \tilde \b(t)\\ \hline \tilde \c^T(t) & \mathbf 0\end{array}\right ]\right),
	\end{align}
	\noindent where $\widetilde{\x}(0)=0$ and with $\widetilde{\A}(t) \in \R^{r \times r}$ and $\widetilde{\b}(t),\widetilde{\c}(t)\in\R^{r}$ also $T$-periodic and chosen in such a way so that $\widetilde y(t) \approx y(t)$ over a wide class of inputs $u(t)$.	

	\serkan{Starting with a Floquet transformation of the  \textsf{\textsf{LTP}}   system \eqref{eq:FOM},} our approach to this problem involves  conversion of the \textsf{LTP} model reduction problem into a closely related 
	 linear time-invariant (\textsf{LTI}) \caleb{multiple-input/multiple-output} (\textsf{MIMO}) 
		 model reduction problem,  connecting the model reduction error in the original \textsf{\textsf{LTP}} setting
		 directly to \chris{a} related \textsf{LTI} model reduction error.  In particular, we are able to provide \emph{a posteriori}  bounds for the error between the full- and reduced-order \textsf{\textsf{LTP}}          
		 systems directly in terms of the error in \chris{an} \textsf{LTI} \textsf{MIMO} model reduction counterpart.
		 This in turn invites the use of the Iterative Rational Krylov Algorithm (\textsf{IRKA}) \cite{Gugercin:2008bc}
		 as a way of minimizing the error bound we establish. 		 
	\chris{We find that this approach generates superior performance over other applicable and commonly used model reduction techniques for the examples considered in this paper.}
	
\section{Background}

\subsection{Previous work on the analysis and reduction of \textsf{\textsf{LTP}} systems}

	A significant body of literature has developed focussing on model reduction of linear time-invariant (\textsf{LTI}) systems \cite{Antoulas:2009tb,BenGS15,antoulas2001survey}.
	Analogous developments for continuous-time \textsf{\textsf{LTP}} systems have been far more limited and usually appear as a special case of the more general problem class of model reduction for general linear time-varying systems where computationally effective strategies for large scale problems have not yet emerged (see, e.g., \cite{sandberg2004balanced,sandberg2006case,lang2015towards}).  Available strategies for this problem have focussed on extensions of balanced truncation to general linear time-varying systems. The computational challenges are formidable even for modest order and are typified by the need to solve two large-scale Lyapunov differential inequalities \caleb{\cite{sandberg2004balanced}}.  Notably, the recent work  \cite{lang2015towards} is a step towards addressing this computational challenge.   By way of contrast, it is worth noting that for the case of linear \emph{discrete-time} periodic systems, strategies built upon balanced truncation have been somewhat more successful, producing strategies that still are computationally demanding but that could remain tractable for problems with modest order (see, e.g., \cite{varga1999balancing,varga2000balanced,farhood2005model,Chahlaoui2005,benner2014low}).  
Throughout this work, our focus is on continuous-time \textsf{\textsf{LTP}} systems.

Aside from model reduction, there is a significant body of literature that has developed on \textsf{\textsf{LTP}} systems focussing on a variety of other important topics including control, spectral analysis, and harmonic response, among others. 
Wereley and Hall \cite{Wereley:1991us, Wereley:1990wj} develop a Fourier analysis of state-space systems with periodic matrices via the harmonic balance method.
They represent the \textsf{\textsf{LTP}} system as generalization of a frequency response operator called the \emph{Harmonic Transfer Function} which can be viewed as an operator on a family of functions called \emph{exponentially modulated periodic inputs}.
%
%
The operator is infinite-dimensional \chris{and} consists of a countable number of time-invariant \textsf{LTI} components.
		
Sandberg et al.\ \cite{Sandberg:2006tk,Sandberg:2004vh} expand the impulse response of \textsf{\textsf{LTP}} systems via Fourier series expansion.
The frequency response of the system is understood as a countable collection of infinite-dimensional \textsf{LTI} transfer functions called the \emph{Floquet-Fourier representation}, making analytic expressions of frequency responses considerably easier to represent .
		
Zhou and Hagiwara \cite{Zhou:2005vs,Zhou:2004vh,Zhou:2002tp,Zhou:2002wf,Zhou:2002tf} address dynamical system norm computation using truncations of the Harmonic Transfer Function.
Convergence of the truncations is proven.
Additionally, the authors discuss the spectral characteristics of {\textsf{LTP}} systems as input/output operators.

\subsection{Linear Time-Invariant Dynamical Systems}

We review briefly the basic features of linear time-invariant dynamical systems that are relevant to our approach and that will serve to establish our notation.  In that context, we consider \caleb{\textsf{MIMO}} linear dynamical systems given in the following state-space form:
\begin{align} \label{eq:ltisys}
				\ltig:\left \{ \begin{aligned}
					\dot{\x}(t) &= \A\x(t) + \mathbf B\mathbf u(t)\\
					\mathbf y(t) &= \mathbf C\x(t)
				\end{aligned} \right .
				\qquad 
			\end{align}
where   $\mathbf u(t)\in\R^{\serkan{n_u}}$, $\mathbf y(t) \in\R^{\serkan{n_y}}$, and $\mathbf x(t)\in\R^n$ are respectively, the \emph{inputs}, \emph{outputs}, and \emph{states} of $\ltig$ so that  $\mathbf A\in\R^{n \times n}$, $\mathbf B\in\R^{n \times \serkan{n_u}}$ and $\mathbf C\in\R^{\serkan{n_y} \times n}$.  
Let	 $\hat{\mathbf{Y}}(s)$ and  $\hat{\mathbf{U}}(s)$ denote the Laplace transforms of $\mathbf{y}(t)$ and $\mathbf{u}(t)$, respectively.
Then by taking the Laplace transform of (\ref{eq:ltisys}), we obtain the transfer function $\mathbf{G}(s)$ of the system $\ltig$, where $\hat{\mathbf{Y}}(s) = \mathbf{G}(s) \hat{\mathbf{U}}(s) $ and
\begin{equation}
	\label{eq:ltitf}
	\mathbf{G}(s) = \mathbf{C} [s\I-\A]^{-1}\mathbf{B} 
\end{equation}
is the \emph{transfer function} of the associated dynamical system $\ltig$ in (\ref{eq:ltisys}).  $ \mathbf{G}(s)$ is a matrix-valued function
 $\mathbf{G}:\serkan{\C \to \C^{\serkan{n_u}}\times\C^{\serkan{n_y}}}$ with entries consisting of proper rational functions with poles at the eigenvalues of $\A$.

\subsubsection{Projection-Based Model Reduction}
When the state-space \caleb{dimension of} $\ltig$ is large,  it may be useful 
to replace  the original model (\ref{eq:ltisys}) with a reduced model that has much smaller state space dimension 
yet still is able to replicate significant features of the original input/output dynamics. Toward this end,  
 we seek a reduced model 
\begin{align}  \label{eq:ltisysred}
		\widetilde \ltig:\left \{ \begin{aligned}
			\dot{\widetilde \x}(t) &= \widetilde{\A}\widetilde{\x}(t) + \widetilde{\mathbf{B}}\mathbf{u}(t)\\
			\widetilde{\mathbf{y}}(t) &=  \widetilde{\mathbf{C}}\widetilde{\x}(t)
		\end{aligned} \right .
	\end{align}
where  $\widetilde{\A} \in \R^{r \times r}$, $\widetilde{\bB}\in\R^{r \times \serkan{n_u}}$, and $\widetilde{\bC}\in\R^{\serkan{n_y} \times r}$ with $r \ll n$, and such that $\widetilde \by(t) \approx \by(t)$ for a large class of \caleb{inputs $\mathbf{u}(t)$}.
In order to make the quality of this approximation largely independent of input, it becomes necessary that the transfer function of~(\ref{eq:ltisysred}) given by 
\begin{equation}
\widetilde{\mathbf{G}}(s)=\widetilde{\mathbf{C}}(s\widetilde{\I} - \widetilde{\A})^{-1}\widetilde{\B}
\end{equation}
should approximate $\mathbf{G}(s)$ well, to the extent that the error
$\mathbf{G}(s)-\widetilde{\mathbf{G}}(s)$ is small with respect to natural metrics that we discuss below. 

The most common way to obtain the reduced-order models as in (\ref{eq:ltisysred}) is via projection:  Construct  two  matrices  $\mathbf{V} \in \R^{n \times r}$ and $\mathbf{W} \in \R^{n \times r}$ such that $\mathbf{W}^T\mathbf{V} = \mathbf{I}_r$.
Then, approximate the full-order state $\mathbf{x}(t)$ by $\mathbf{V}\widetilde{\mathbf{x}}(t)$, 
and enforce the Petrov-Galerkin condition, forcing orthogonality of the residual dynamics to the range of $\mathbf{W}$:
$$
\mathbf{W}^{T}\left( \mathbf{V}\dot{\widetilde{\mathbf{x}}} (t)-
\A\mathbf{V}\widetilde{\mathbf{x}}(t)-\mathbf{B}\,\mathbf{u}(t)\right)=\mathbf{0}, \qquad
\quad \widetilde{\mathbf{y}}(t) = \mathbf{C}\mathbf{V} \widetilde{\mathbf{x}}(t).
$$
Then the reduced model state-space matrices become 
\begin{equation}
	\widetilde{\A} = \W^T\A\V,  ~\widetilde{\mathbf{B}} = \W^T \mathbf{B}, ~~~\mbox{and}~~~\widetilde{\mathbf{C}} = \mathbf{C}\V
\end{equation}
Obviously the choice of $\V$ and $\W$ will determine the accuracy of the reduced model approximation. 

\subsubsection{The $\Hardy_2$ Inner-product Space}

Let  $\Hardy_{2}$ denote the set of  $n_y\times \serkan{n_u}$ matrix-valued functions \caleb{$\bH(s)$ analytic} in the open right half-plane  such that $\sup_{x>0}\int_{\serkan{-\infty}}^{\infty} \|\bH(x+\i y)\|_F^2\, dy < \infty$. 
\serkan{The $\Hardy_{2}$ space}
is a   Hilbert space
 endowed with the  inner product 
	\serkan{
		 \begin{align} 
			\langle \mathbf G,\mathbf H\rangle_{\H2}
			= \frac{1}{2\pi} \int_{-\infty}^\infty \mbox{\normalfont\textsf{trace}}\!\left ( \overline{\mathbf G(\i\omega)} \mathbf H^T(\i\omega)\right )d\omega
			= \frac{1}{2\pi} \int_{-\infty}^\infty\mbox{\normalfont\textsf{trace}}\!\left ( \mathbf G(-\i\omega) \mathbf H^T(\i\omega)\right )d\omega
	\label{eqn:h2innerlt}
		\end{align}}
		with the associated norm
		\begin{align}  \label{eq:h2normlti}
			\left\|\mathbf G\right\|_{\H2}
			= \left (\frac{1}{2\pi} \int_{-\infty}^\infty \|\mathbf G(\i\omega)\|_F^2 d\omega \right )^{1/2}
			=  \left (\int_0^\infty \mbox{\normalfont\textsf{trace}}(\bm{g}^*(t) \bm{g}(t))dt \right)^{1/2}
		\end{align}
	\noindent where $\bm{g}(t)\in\R^{\serkan{n_y}\times \serkan{n_u}}$ is the matrix-valued impulse response of the \textsf{MIMO} dynamical system, $\ltig$.
	The $\H2$ norm is of particular interest because it bounds the dynamical system output for bounded-energy inputs as,
		\begin{align*}
			\left \|\by\right\|_{\mathsf L_\infty} \leq \left\|\bG\right\|_{\H2}\left\|\bu\right\|_{\mathsf L_2}.
		\end{align*}		
		 The next result, used frequently throughout this paper, provides  an alternative representation of the $\Hardy_2$ inner product using the residue calculus.

		\begin{theorem}{\cite[Lemma 2.4, p.615]{Gugercin:2008bc}}
    Suppose that $\bG(s)$ and $\bH(s)$ are stable (poles contained in the open left 
halfplane) and suppose that  $\bH(s)$ has poles at 
$\mu_1,\,\mu_2,\,\ldots\,\mu_n$.  Then
\begin{equation} \label{h2ip_res}
    \left\langle \bG,\ \bH \right\rangle_{\Hardy_2} =  \sum_{k=1}^{n} 
    \mbox{\normalfont\textsf{res}}[\mbox{\normalfont\textsf{Tr}}\!
    \left( \overline{\bG}(-s) \bH(s)^T\right),\mu_k].
\end{equation}
    In particular, 
     if $\bH(s)$ has only simple or semi-simple poles at
     $\mu_1,\,\mu_2,\,\ldots\,\mu_n$, then
$\bH(s)=\sum_{i=1}^n\frac{1}{s-\mu_i}\bfsfc_i\bfsfb_i^T$, \serkan{where $\bfsfc_i\bfsfb_i^T$ is the residue of $\bH(s)$ at $s=\mu_i$},
and
\begin{align*}
    \langle \bG,\,\bH \rangle_{\Hardy_2} =  \sum_{k=1}^{n} 
   \bfsfc_k^T\overline{\bG}(-\mu_k)\bfsfb_k~~\mbox{and}~~
    \left\| \bH \right\|_{\Hardy_2} =  \left(\sum_{k=1}^{n} 
   \bfsfc_k^T\overline{\bH}(-\mu_k)\bfsfb_k \right)^{1/2}.
\end{align*}
		\end{theorem}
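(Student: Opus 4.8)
The plan is to establish the residue formula \eqref{h2ip_res} by starting from the frequency-domain definition \eqref{eqn:h2innerlt} of the $\Hardy_2$ inner product and deforming the contour of integration. Since both $\bG$ and $\bH$ are stable, the integrand $\mbox{\normalfont\textsf{Tr}}(\overline{\bG}(-\i\omega)\bH(\i\omega)^T)$ appearing in \eqref{eqn:h2innerlt} is the restriction to the imaginary axis of the rational matrix function $F(s) := \mbox{\normalfont\textsf{Tr}}(\overline{\bG}(-s)\bH(s)^T)$. The key structural observation is that $\overline{\bG}(-s)$ has its poles in the \emph{open right} half-plane (since $\bG$ is stable, the poles of $\bG$ lie in the open left half-plane, and reflecting $s\mapsto -s$ together with conjugation of coefficients moves them to the right), whereas $\bH(s)^T$ has all its poles $\mu_1,\dots,\mu_n$ in the \emph{open left} half-plane. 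Hence $F(s)$ has no poles on the imaginary axis, and its only left-half-plane poles are exactly $\mu_1,\dots,\mu_n$.

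\textbf{Key steps.} First I would close the contour along the imaginary axis with a large semicircle in the \emph{left} half-plane and verify that the semicircular contribution vanishes as the radius tends to infinity: this requires that $F(s)=O(|s|^{-2})$ at infinity, which holds because $\bG$ and $\bH$ are strictly proper (transfer functions of finite-dimensional state-space systems with no feedthrough term), so $\overline{\bG}(-s)$ and $\bH(s)^T$ are each $O(|s|^{-1})$. Second, by the residue theorem, $\frac{1}{2\pi\i}\oint F(s)\,ds$ over this closed contour (oriented so the left half-plane is enclosed with the correct sign) equals $\sum_{k=1}^n \mbox{\normalfont\textsf{res}}[F(s),\mu_k]$; combined with the vanishing of the arc, $\frac{1}{2\pi}\int_{-\infty}^\infty F(\i\omega)\,d\omega = \sum_{k=1}^n \mbox{\normalfont\textsf{res}}[F(s),\mu_k]$, which is precisely \eqref{h2ip_res} after matching the orientation-induced sign. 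Third, for the simple/semisimple case, I would substitute the partial-fraction form $\bH(s)=\sum_{i=1}^n \frac{1}{s-\mu_i}\bfsfc_i\bfsfb_i^T$ into the residue sum; since $\overline{\bG}(-s)$ is analytic at each $\mu_k$, the residue of $\mbox{\normalfont\textsf{Tr}}(\overline{\bG}(-s)\bH(s)^T)$ at $\mu_k$ is $\mbox{\normalfont\textsf{Tr}}(\overline{\bG}(-\mu_k)\bfsfb_k\bfsfc_k^T) = \bfsfc_k^T\overline{\bG}(-\mu_k)\bfsfb_k$, giving the stated closed form. The norm formula follows by taking $\bG=\bH$.

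\textbf{Main obstacle.} The technical care is mostly in bookkeeping rather than in any deep idea: getting the direction of the conjugation-reflection $\bG\mapsto\overline{\bG}(-\cdot)$ right so that the pole locations land where claimed, tracking the sign that arises from orienting the left-half-plane contour clockwise, and justifying the semicircle estimate uniformly (including the case of higher-order poles in the general formula, where one must argue that $\sum_k\mbox{\normalfont\textsf{res}}[F,\mu_k]$ still captures all enclosed singularities). A secondary subtlety is the semisimple-pole bookkeeping: if $\mu_i$ is repeated, the partial fraction expansion still has only first-order terms, so the residue computation collapses to the same trace expression summed over the multiplicity — I would note this rather than treating Jordan blocks, since semisimplicity is exactly the hypothesis that rules out higher-order principal parts at those points. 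None of these steps is conceptually hard; the risk is purely in a misplaced sign or an overlooked pole, so I would double-check the final formula against a scalar first-order example $\bH(s)=\bG(s)=\frac{1}{s+1}$, where both sides should return $\tfrac12$.
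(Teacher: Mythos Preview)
Your proposal is correct and follows the standard contour-integration argument for this classical result. Note, however, that the paper does not actually prove this theorem: it is stated with a citation to \cite[Lemma 2.4, p.615]{Gugercin:2008bc} and no proof is given in the present paper, so there is no ``paper's own proof'' to compare against. Your argument via closing the imaginary-axis integral with a left-half-plane semicircle, invoking the $O(|s|^{-2})$ decay of the strictly proper integrand, and then reading off residues at the poles $\mu_k$ of $\bH$ is exactly the route taken in the original reference, and your handling of the semisimple case and the sanity check are appropriate.
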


\serkan{\subsubsection{Optimal $\H2$ Approximation}  \label{sec:irkah2}}
We seek  reduced models for \textsf{LTP} systems that are accurate with respect to a metric that is equivalent to the  $\Hardy_2$ norm for \textsf{LTI} systems (which are special cases of \textsf{LTP} systems).  Indeed, for the case of  \textsf{LTI} systems, we are able to do this optimally, finding a degree-$r$ \textsf{LTI} reduced model with transfer function $\bH_r(s)$ that minimizes (at least locally) the $\Hardy_2$ error norm $\|\bH - \widetilde{\bH}_r\|_{\Hardy_2}$  over all degree-$r$ reduced models with transfer functions $\widetilde{\bH}(s)$.
The optimal \textsf{LTI} $\Hardy_2$ approximant satisfies certain Hermite tangential interpolation conditions; for details, see, e.g., \cite{Ant2010imr,Gugercin:2008bc}.  Significant for the present work is the availability of a numerically effective algorithm 
with which one may construct (local) $\Hardy_2$-optimal approximants, namely the \emph{Iterative Rational Krylov Algorithm} (\textsf{IRKA}) of  \cite{Gugercin:2008bc}.  
\serkan{\textsf{IRKA} is an iterative algorithm;  it typically converges quite rapidly
though its speed of convergence may slow as the number of inputs and outputs grows. 
Beattie and Gugercin in \cite{beattie2012realization} 
developed a modified version of \textsf{IRKA}, in order to address this slowing of convergence and improved  the performance of IRKA significantly for MIMO problems; see \cite{beattie2012realization} for more details. 
  Convergence of \textsf{IRKA} is guaranteed for special cases \cite{flagg2012convergence} even though
there are known, but rare, cases where convergence may fail \cite{Gugercin:2008bc,flagg2012convergence}. Upon convergence, the resulting reduced model is guaranteed to be a local $\mathcal{H}_2$-minimizer. 
 In \cite{beattie2009trm}, Beattie and Gugercin developed a trust-region framework for \textsf{IRKA}, which is \emph{globally convergent} to a local $\mathcal{H}_2$-minimizer.  The added guarantees this approach provides seem unnecessary for most problems at hand and the original formulation of \textsf{IRKA}  has been successfully applied to large-scale problems in various application settings in order to provide (locally) optimal reduced models; see, e.g., \cite{KRXC08,borggaard2012model,Gugercin:2008bc}.}
We will employ \textsf{IRKA} as a critical step in the model reduction framework for \textsf{\textsf{LTP}} systems that we propose here.
   
\subsection{Linear Time-Periodic Dynamical Systems}
	\label{chap:LTP}
\caleb{We view the time-periodic system \eqref{eq:FOM} as a causal linear map $\ltp: \L2 \to \L2$, that maps an input signal $u\in \L2$ to an output signal $y \in \L2$.} This can be expressed via a convolution integral with an generalized impulse \caleb{response $g(t,\tau)$ which,} for $t \geq \tau$, gives the response of the system \eqref{eq:FOM} to an impulsive input at $t=\tau$.  Thus,
	\begin{align}
		\label{eqn:kernel}
		y(t) &= \ltp u = \int_{0}^t g(t,\tau)u(\tau)d\tau,
	\end{align}

Unlike \textsf{LTI} systems, the impulse response of a time-varying system depends on the time of the \caleb{impulse $\tau$ and} not simply on the time elapsed since the impulse was applied, $t-\tau$. 
Since the state-space parameters are $T$-periodic, so too are the impulse \caleb{responses.
That is,}
\begin{align*}
	g(t+T,\tau+T) = g(t,\tau),\quad \mbox{for all} \quad t\geq \tau.
\end{align*}

\noindent \caleb{In this setting, causality implies that $g(t,\tau)=0$ for $t < \tau$.}

\subsubsection{Frequency Coupling}
	An important characteristic of stable \textsf{LTI} systems is that their steady state response to a single-frequency sinusoid is another sinusoid of the same frequency. That is, if $G(s)$ is the transfer function of an \textsf{LTI} \caleb{system $\ltp$, then}
	\begin{align*}
		u(t) = \sin(\omega t) \qquad \overset{y = \ltp u}{\Longrightarrow} \qquad y(t) \to |G(\i\omega)|\sin[\omega t + \mbox{arg}(G(\i\omega))].
	\end{align*}
	Note that we write ``$y(t) \to $" to describe the steady-state behavior of $y(t)$ after the transient response has decayed.

	By way of contrast, \textsf{LTP} systems produce a countable number of harmonics of the input frequency. 
\serkan{Let $r = t-\tau$. Since $g(t,t-r)$ is $T$-periodic in $t$, the impulse response can be expanded into a Fourier series in \caleb{$r$,}
	\begin{align*}
		g(t,t-r) = \sum_{k = -\infty}^{\infty} { g_k(r)e^{\i k\omega_0 t}, \quad\mbox{where} \quad
		g_k(r) }= \frac{1}{T}\int^T_0 e^{-\i k\omega_0 t}g(t,t-r)dt.
	\end{align*}
	We call  $\{g_k\}$ the subsystems of $\ltp$. } Now,
	let $u(t) = \sin(\omega t)$ and $\displaystyle\omega_0 = \frac{2\pi}{T}$ be the fundamental frequency of the \textsf{LTP} system $\ltp$. Then,
	\begin{align*}
 		u(t) = \sin(\omega t)
		\quad \overset{y = \ltp u}{\Longrightarrow} \quad
		y(t) \to \sum_{k\in\Z}|g_k(\i\omega)|\sin[(\omega + k\omega_0) t + \mbox{arg} (g_k(\i\omega))].
	\end{align*}
	For a single frequency input, an \textsf{LTP} system will have output frequencies \caleb{$\omega_k = \omega+k\omega_0$}, where $k \in \mathbb{Z}$; \caleb{see Sandberg \cite{Sandberg:2006tk}}.
%

\subsubsection{Floquet Transformations}
		The Floquet transformation is a time-dependent change-of-variable that transforms the time-periodic differential equation
		\begin{align}
			\label{homogenousLTP}
			\dot{\x} = \A(t)\x,\qquad \A(t) = \A(t+T)
		\end{align}
		to an equivalent system of differential equations with constant coefficients, $\dot{\mathbf z} = \Q \mathbf z(t)$ via a periodic, time-dependent change of variables, $\mathbf P(t)\mathbf z(t) = \x(t)$; see, e.g., \cite{grimshaw1991nonlinear} for details.
		In particular, we have
		\begin{theorem}[Floquet's Theorem]  \label{thm:floquet}
		Let $\A(t) \in \mathbb{R}^{n \times n}$ be  continuous 
and periodic with 	period $T$ 	for $-\infty<t<\infty$
			Then any fundamental matrix $\mathbf X(t)$ of the differential equation				$\dot{\x} = \A(t)\x$
	 has a representation of the form
			\begin{align*}
				\mathbf X(t) = \mathbf P(t)e^{\Q t}, \qquad \mbox{where~~}
			\mathbf P(t) \in \mathbb{R}^{n \times n} \mbox{~with~}
				\mathbf P(t+T) = \mathbf P(t),
			\end{align*}
			and $\Q \in \mathbb{R}^{n \times n}$ is a constant matrix.
		\end{theorem}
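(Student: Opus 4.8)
\emph{Proof proposal.} The plan is to exploit the group structure of solutions of the homogeneous matrix equation $\dot{\mathbf X} = \A(t)\mathbf X$ together with surjectivity of the matrix exponential onto the invertible matrices. First I would fix a fundamental matrix $\mathbf X(t)$, so that $\dot{\mathbf X}(t) = \A(t)\mathbf X(t)$ with $\mathbf X(t)$ invertible for every $t$; invertibility at all $t$ follows from the Abel--Liouville identity $\det \mathbf X(t) = \det \mathbf X(0)\,\exp\!\big(\int_0^t \operatorname{tr}\A(s)\,ds\big)$, which is nonzero once $\det\mathbf X(0)\neq 0$.

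The central observation is that the time-shifted matrix $\mathbf Y(t) := \mathbf X(t+T)$ is again a fundamental matrix: differentiating and using $\A(t+T)=\A(t)$ gives $\dot{\mathbf Y}(t) = \A(t+T)\mathbf X(t+T) = \A(t)\mathbf Y(t)$, and $\mathbf Y$ inherits invertibility from $\mathbf X$. Since any two fundamental matrices of the same linear system differ by a constant invertible right factor (the matrix $\mathbf X(t)^{-1}\mathbf Y(t)$ has vanishing derivative), there is a constant nonsingular \emph{monodromy matrix} $\mathbf M$ with $\mathbf X(t+T) = \mathbf X(t)\mathbf M$ for all $t$; explicitly $\mathbf M = \mathbf X(0)^{-1}\mathbf X(T)$.

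Next I would write $\mathbf M = e^{T\Q}$ for a constant matrix $\Q$, invoking the fact that every invertible matrix lies in the image of the exponential map. Setting $\mathbf P(t) := \mathbf X(t)e^{-\Q t}$, periodicity is a direct check: $\mathbf P(t+T) = \mathbf X(t+T)e^{-\Q(t+T)} = \mathbf X(t)\,e^{T\Q}\,e^{-\Q t}\,e^{-\Q T} = \mathbf X(t)e^{-\Q t} = \mathbf P(t)$, where the rearrangement is legitimate because all exponentials of the single matrix $\Q$ commute. Rearranging gives $\mathbf X(t) = \mathbf P(t)e^{\Q t}$, and continuity together with $T$-periodicity of $\mathbf P$ follow from those of $\mathbf X$ and the smoothness of $t\mapsto e^{-\Q t}$.

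The one genuinely delicate point — and the main obstacle — is the claim that $\Q$ may be taken \emph{real}. Surjectivity of $\exp$ onto the invertible matrices is immediate over $\C$ (via the Jordan form / holomorphic functional calculus), but a real logarithm of a real invertible $\mathbf M$ need not exist; the obstructions sit in the Jordan blocks attached to negative real eigenvalues of $\mathbf M$. The standard remedy is to pass to $\mathbf M^2 = e^{2T\Q}$, which does admit a real logarithm because the Jordan structure of a real matrix over its negative spectrum pairs up after squaring; this produces a real $\Q$ at the cost of $2T$-periodicity of $\mathbf P$. I would therefore either state the real version with period $2T$, or keep $\Q$ complex with a complex periodic $\mathbf P$ — in either form it is only the structural factorization $\mathbf X(t)=\mathbf P(t)e^{\Q t}$ with periodic $\mathbf P$ that the subsequent Floquet-transformation and model-reduction arguments actually use.
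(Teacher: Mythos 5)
Your argument is correct, and it is the standard proof of Floquet's theorem. For comparison: the paper does not prove this statement at all --- it defers to Grimshaw \cite{grimshaw1991nonlinear} and simply records, immediately after the theorem, the explicit construction $\Q = \frac{1}{T}\log(\mathbf M(T))$ with monodromy matrix $\mathbf M(T) = \mathbf X(T)\mathbf X(0)^{-1}$ and $\mathbf P(t) = \mathbf X(t)\mathbf X(0)^{-1}e^{-\Q t}$. What you have written is exactly the justification of that construction: the time-shifted fundamental matrix, the constant invertible monodromy factor, the matrix logarithm, and the direct periodicity check for $\mathbf P$. The only cosmetic difference is that you place the monodromy factor on the right of $\mathbf X(t)$ itself ($\mathbf X(t+T)=\mathbf X(t)\mathbf M$ with $\mathbf M = \mathbf X(0)^{-1}\mathbf X(T)$), while the paper first normalizes to $\boldsymbol\Phi(t)=\mathbf X(t)\mathbf X(0)^{-1}$ so that its $\mathbf M(T)$ is the conjugate $\mathbf X(T)\mathbf X(0)^{-1}$; the periodicity computation is identical either way. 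Your closing remark about the real logarithm is well taken and is not mere pedantry: as printed, the theorem asserts a \emph{real} $\Q$ together with a \emph{$T$-periodic} real $\mathbf P$, which can fail when $\mathbf M(T)$ has negative real eigenvalues whose Jordan blocks do not pair up, and the standard repair is either a complex $\Q$ (with complex $T$-periodic $\mathbf P$) or a real $\Q$ with $2T$-periodic $\mathbf P$. This imprecision is harmless for the rest of the paper --- the subsequent analysis already works with complex Fourier coefficients, and passing to period $2T$ merely halves $\omega_0$ --- but your proof correctly identifies it as the one point where the statement, taken literally, outruns the standard argument.
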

		
The constant matrix, $\Q$ is defined so that $\Q = \frac1T\mathsf{log}(\mathbf M(T))$ where \serkan{the matrix} $\mathbf M(T)=\mathbf X(T)\mathbf X(0)^{-1}$
is the \emph{monodromy matrix} associated with (\ref{homogenousLTP}).
Note that $\Q$ is stable precisely when $\mathbf M(T)$ is a contraction.  The $T$-periodic matrix $\P(t)$ is defined as
$\mathbf P(t)=\mathbf X(t) \mathbf X(0)^{-1} e^{-\Q t}$ for $t\geq 0$. 

\caleb{Given an \textsf{LTP} system, $\ltp$, a Floquet transformation using the periodic change of variable $\mathbf z(t) = \P^{-1}(t) \x(t)$ from Theorem \ref{thm:floquet}, can be performed} so that 
	\begin{equation} \label{postFloquet}
		\ltp:\left\{\begin{aligned}
			\dx(t) &= \A(t) \x(t) + \b(t) u(t)\\
		y(t) &= \c^T(t) \x(t)
	\end{aligned}\right. ~ \Longrightarrow ~
	\ltp:\left\{\begin{aligned}
		\dot{\mathbf z}(t) &= \Q \mathbf z(t) + \P^{-1}(t)\b(t) u(t)\\
		y(t) &= \c^T(t) \P(t) \mathbf z(t)
	\end{aligned}\right.
\end{equation}
The Floquet transformation can be costly to determine for large-scale \textsf{LTP} systems. Methods have been developed recently to make the transformation computationally tractable for modest order; see, e.g., \cite{Moore:2005un,Cai:2001wh} for a discussion.  The main computational cost in most cases will be the calculation of the monodromy matrix, which necessitates the solution of $n$ independent initial value problems associated with \eqref{homogenousLTP}. The remaining tasks scale with a complexity of $n^3$, so this can be feasible for modest orders of $n$.  \chris{In this work, we will not consider the practical difficulties associated with constructing this  transformation of the original \textsf{LTP} system; we will suppose that our \textsf{LTP} system is presented with a realization having the form \eqref{postFloquet} with constant $\Q$ and  $T$-periodic input and output maps. }

\section{An  $\H2$  Analysis for \textsf{LTP} systems}
We develop here for \textsf{LTP} systems the metrics and associated analysis that extend certain notions of $\H2$-approximation that are well-established for  \textsf{LTI}  systems. 
\subsection{The Periodic $\H2$ Inner Product}

Let  $\ltp$ and $\ltph$ be two \textsf{LTP} systems with fundamental frequency $\displaystyle\omega_0 = \frac{2\pi}{T}$. The $\H2$ inner product  $\langle \ltp,\ltph\rangle_\H2$ is defined in terms of their impulse \caleb{responses $g(t,\tau)$ and $h(t,\tau)$},
\begin{align} \label{eqn:h2inner}
			\langle \ltp,\ltph \rangle_{\H2} &= \frac{1}{T} \int_{t=0}^T \int_{r=0}^\infty \overline{g(t,t-r)}\, h(t,t-r) drdt.
			\end{align}
The next results express this inner product in terms of the kernel subsystems; this can be found as Corollary 1 in Sandberg et al.\ \cite{Sandberg:2004vh}.
Our formulation is framed in the frequency domain whereas the original formulation of
Sandberg et al.\ \cite{Sandberg:2004vh} places it in the time domain.
	\begin{theorem} 
		\label{thm:h2innerproduct}
		Let $\ltp$ and $\ltph$ be two \textsf{LTP} systems with subsystems $g_k(t)$ and $h_k(t)$, respectively. Then,
		\begin{align} \label{eq:sandberhh2inner}
			\langle \ltp,\ltph\rangle_\H2 = \sum_{ k \in \Z} \langle \hat g_k, \hat h_k\rangle_{\H2}
		\end{align}
		where $\hat g_k(s)$ and $\hat h_k(s)$ denote the Laplace transformations of $g_k(t)$ and $h_k(t)$, respectively, and 
		$\langle \hat g_k, \hat h_k\rangle_{\H2}$ is the regular $\H2$-inner product for  \textsf{LTI} systems defined in  \eqref{eqn:h2innerlt}.
	\end{theorem}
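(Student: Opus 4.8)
The plan is to start from the definition of the periodic $\H2$ inner product in \eqref{eqn:h2inner} and substitute the Fourier-series expansion of the impulse responses in the elapsed-time variable $r = t - \tau$. Writing $g(t,t-r) = \sum_{k\in\Z} g_k(r) e^{\i k\omega_0 t}$ and $h(t,t-r) = \sum_{\ell\in\Z} h_\ell(r) e^{\i \ell\omega_0 t}$, the conjugate of $g(t,t-r)$ contributes $\sum_k \overline{g_k(r)} e^{-\i k\omega_0 t}$ (the subsystems are complex-valued, but the time-averaging over a full period will kill cross terms regardless). First I would insert these two series into \eqref{eqn:h2inner}, interchange the (countable) sums with the integrals---this requires an absolute-convergence / dominated-convergence justification, which is where the real analytic work lives---and then carry out the $t$-integration over $[0,T]$ first.

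The key mechanism is orthogonality of the exponentials: $\frac{1}{T}\int_0^T e^{\i(\ell-k)\omega_0 t}\,dt = \delta_{k\ell}$. Applying this to the double sum collapses it to a single sum, leaving
\begin{align*}
\langle \ltp,\ltph\rangle_\H2 = \sum_{k\in\Z} \int_{r=0}^\infty \overline{g_k(r)}\, h_k(r)\, dr.
\end{align*}
The remaining step is to recognize each term as an ordinary $\H2$ inner product of the corresponding (scalar, hence $1\times 1$ MIMO) \textsf{LTI} transfer functions. For this I would invoke the time-domain form of the $\H2$ inner product on the right-hand side of the identity in \eqref{eq:h2normlti}---or more precisely its polarization---namely $\langle \hat g_k,\hat h_k\rangle_{\H2} = \int_0^\infty \overline{g_k(t)}\, h_k(t)\, dt$, since $g_k(r)$ is exactly the impulse response whose Laplace transform is $\hat g_k(s)$. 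That matches each summand term-by-term and yields \eqref{eq:sandberhh2inner}.

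The main obstacle is the justification of interchanging the infinite sums with the integrals and with the limiting operation defining the improper $r$-integral. This needs an appeal to square-summability of the subsystems: by Parseval applied to the Fourier series of $g(t,t-\cdot)$ in $t$, for each fixed $r$ one has $\sum_k |g_k(r)|^2 = \frac{1}{T}\int_0^T |g(t,t-r)|^2\,dt$, and integrating in $r$ together with the finiteness of $\|\ltp\|_\H2$ (equivalently, stability of $\ltp$, which is assumed via $\ltp:\L2\to\L2$ bounded) gives $\sum_k \|g_k\|_{L_2(0,\infty)}^2 < \infty$, and similarly for $h$. Cauchy--Schwarz on $\ell^2 \otimes L_2$ then controls $\sum_k \int_0^\infty |\overline{g_k(r)}\, h_k(r)|\,dr \le \big(\sum_k\|g_k\|^2\big)^{1/2}\big(\sum_k\|h_k\|^2\big)^{1/2} < \infty$, which legitimizes Fubini/Tonelli and the rearrangement. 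A secondary (minor) point is to note that although individual $g_k$ are complex-valued, the identity and the complex bilinear pairing \eqref{eqn:h2innerlt} are stated for matrix-valued functions and extend verbatim to this scalar complex case; I would remark on this rather than belabor it. The rest is bookkeeping, so I would keep the written proof to the substitution, the orthogonality collapse, and a one-line convergence remark, citing \cite{Sandberg:2004vh} for the original time-domain statement.
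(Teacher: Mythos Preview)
Your proposal is correct and is essentially the natural argument: expand both kernels in their Fourier series in $t$, use orthogonality of $e^{\i k\omega_0 t}$ over one period to collapse the double sum, and then identify each surviving $r$-integral with the time-domain form of the \textsf{LTI} $\H2$ inner product via Plancherel. The convergence justification you outline (Parseval in $t$ followed by Cauchy--Schwarz in $\ell^2\otimes L_2$) is the right way to legitimize the interchange.

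Note, however, that the paper does not actually supply a proof of this theorem: it is stated as a frequency-domain reformulation of Corollary~1 in Sandberg et al.~\cite{Sandberg:2004vh} and simply cited. So there is no ``paper's own proof'' to compare against beyond that reference; your argument is precisely the kind of direct verification one would expect, and it matches the time-domain derivation in \cite{Sandberg:2004vh} once the Plancherel step is appended to pass to $\hat g_k,\hat h_k$.
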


	It follows immediately that 
	\begin{equation}  \label{eq:LTIH2sub}
	\left\|\ltp\right\|_\H2^2 = \sum_{k \in \Z} \left\|\hat g_k\right\|^2_{\H2}.
	\end{equation}
	 This expression for the $\H2$ norm will allow us to develop a simple test for the boundedness of the $\H2$ norm of an \textsf{LTP} system 
	 $\ltp$.  We adapt the following (standard) definition to our context: 
	 \begin{definition}  For $\alpha>0$, $\mathsf{Lip}(\alpha)$ denotes the set of continuous periodic functions $f(t)$ (say, with period T and fundamental frequency $\omega_0 = \frac{2\pi}{T}$) such that  for some finite $M>0$, $|f(t_1)-f(t_2)|\leq M|t_1-t_2|^{\alpha}$ uniformly for all $t_1,t_2\in [0,T]$. 
	 \end{definition}
\noindent
Note that $\mathsf{Lip}(\alpha)$ with $\alpha>1$ consists only of constant \caleb{functions and} $\mathsf{Lip}(1)$ consists of Lipschitz continuous
periodic functions which in turn will be contained within $\mathsf{Lip}(\alpha)$ for any $\alpha\in(0,1)$. 

	 Our main result related to the $\H2$ analysis of \textsf{LTP} systems makes use of the following two \serkan{lemmata:}
\begin{lemma}{(\cite[Theorem 1, p.176]{borwein2005weighted})}.
	\label{thm:borwein}
Let $\ell_1$ and $\ell_2$ denote the usual normed spaces of absolutely summable and square summable (scalar) sequences, respectively.
If $\serkan{\{a_k\}}\in \ell_1$ and $\serkan{\{x_k\}}\in \ell_1$ then the convolution $y_k = \sum_\ell a_{k-\ell}x_\ell$ is absolutely convergent;
	$\serkan{\{y_k\}}\in \ell_1$.   If $\serkan{\{a_k\}}\in \ell_1$ and $\serkan{\{x_k\}}\in \ell_2$ then the convolution $y_k = \sum_\ell a_{k-\ell}x_\ell$ is an  $\ell_2$ sequence;	$\serkan{\{y_k\}}\in \ell_2$.
\end{lemma}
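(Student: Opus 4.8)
The plan is to recognize this as the two endpoint cases of Young's convolution inequality and to prove each directly using Tonelli's theorem for nonnegative series together with the Cauchy--Schwarz inequality. Throughout I would exploit the translation invariance of the $\ell_1$ norm, namely $\sum_k |a_{k-\ell}| = \sum_m |a_m| = \|a\|_{\ell_1}$ for every fixed $\ell$, obtained by the reindexing $m = k-\ell$; this is the single structural fact that drives both estimates.

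For the first assertion ($\ell_1 * \ell_1 \subset \ell_1$), I would bound each convolution term by the triangle inequality, $|y_k| \le \sum_\ell |a_{k-\ell}||x_\ell|$, then sum over $k$. Because every summand is nonnegative, Tonelli's theorem permits interchanging the order of summation, after which translation invariance collapses the inner sum:
\[
\sum_k |y_k| \le \sum_k \sum_\ell |a_{k-\ell}||x_\ell| = \sum_\ell |x_\ell| \sum_k |a_{k-\ell}| = \|a\|_{\ell_1} \sum_\ell |x_\ell| = \|a\|_{\ell_1}\,\|x\|_{\ell_1} < \infty.
\]
This simultaneously shows that the series defining each $y_k$ converges absolutely and that $\{y_k\} \in \ell_1$.

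For the second assertion ($\ell_1 * \ell_2 \subset \ell_2$), the key device is a weighted Cauchy--Schwarz split: write $|a_{k-\ell}||x_\ell| = |a_{k-\ell}|^{1/2}\cdot\bigl(|a_{k-\ell}|^{1/2}|x_\ell|\bigr)$ and apply Cauchy--Schwarz in the index $\ell$ to obtain
\[
|y_k|^2 \le \Bigl(\sum_\ell |a_{k-\ell}|\Bigr)\Bigl(\sum_\ell |a_{k-\ell}||x_\ell|^2\Bigr) = \|a\|_{\ell_1} \sum_\ell |a_{k-\ell}||x_\ell|^2 .
\]
Well-definedness of $y_k$ is secured here because $\{x_\ell\}\in\ell_2$ is bounded, so $\sum_\ell |a_{k-\ell}||x_\ell|^2 \le \|a\|_{\ell_1}\sup_\ell |x_\ell|^2 < \infty$. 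Summing the displayed bound over $k$, exchanging the two sums once more by Tonelli, and applying translation invariance yields
\[
\sum_k |y_k|^2 \le \|a\|_{\ell_1} \sum_\ell |x_\ell|^2 \sum_k |a_{k-\ell}| = \|a\|_{\ell_1}^2\,\|x\|_{\ell_2}^2 < \infty,
\]
so $\{y_k\} \in \ell_2$.

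I expect the only genuine difficulties to be bookkeeping rather than conceptual: justifying each interchange of summation (handled uniformly by Tonelli's theorem, since all terms are nonnegative) and confirming that each individual convolution sum $y_k$ is finite before it is manipulated. The one nonroutine idea is the weighted splitting $|a_{k-\ell}| = |a_{k-\ell}|^{1/2}\cdot|a_{k-\ell}|^{1/2}$ in the second case, which is precisely what converts the bare convolution into a form where Cauchy--Schwarz extracts an $\ell_1$-weight and leaves a $\|a\|_{\ell_1}$-weighted copy of $|x_\ell|^2$ that sums to the desired $\ell_2$ bound.
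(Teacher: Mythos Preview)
Your argument is correct and complete; both cases are the standard endpoint instances of Young's convolution inequality, and the Tonelli/Cauchy--Schwarz route you take is the textbook proof. Note, however, that the paper does not supply its own proof of this lemma: it is stated with a citation to Borwein and Kratz and used as an ingredient in the subsequent boundedness theorem. So there is no paper proof to compare against; you have simply filled in what the authors chose to quote. The one remark worth making is that your well-definedness check in the $\ell_2$ case bounds $\sum_\ell |a_{k-\ell}||x_\ell|^2$, which controls the right-hand factor in Cauchy--Schwarz; the absolute convergence of $y_k$ itself then follows from the Cauchy--Schwarz bound once both factors are known to be finite, so the logic closes.
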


\begin{lemma}
	\label{lemma:fourierpwsmooth} 
	Let the vector-valued function $\x(t)\in\R^n$ be $T$-periodic with components in $\mathsf{Lip}(\alpha)$ with $\alpha>\frac12$.
	Then 
	\begin{enumerate}
	\item the Fourier \caleb{expansion $\x(t) = \sum_{k\in\Z} \x_k e^{jk\omega_0 t}$} is absolutely convergent and $\{\|\x_k\|_2\}\in\ell_1$ 
	(Bernstein's Theorem, \cite[Theorem VI.3.1]{zygmund2002trigonometric}).
	\item the Fourier coefficients $\{\x_k\}$ satisfy  $\sqrt{k}|\x_k|\rightarrow 0$ (componentwise) as $k\rightarrow \infty$ (\cite[II.4]{zygmund2002trigonometric})
	\end{enumerate}
\end{lemma}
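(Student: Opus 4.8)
The plan is to reduce both assertions to the scalar, one-dimensional case, where they are exactly the classical facts of Zygmund quoted in the statement; the only work is the (routine) passage to vector-valued $\x(t)\in\R^n$, which goes through because $n$ is finite and all norms on $\R^n$ are equivalent. A preliminary rescaling $t\mapsto 2\pi t/T$ reduces everything to $2\pi$-periodic functions and preserves membership in $\mathsf{Lip}(\alpha)$ (with a modified constant), so I may quote the scalar results in their usual form.

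For part (1), write $\x(t)=(x^{(1)}(t),\dots,x^{(n)}(t))^T$; the hypothesis says each scalar $T$-periodic function $x^{(i)}$ lies in $\mathsf{Lip}(\alpha)$ with $\alpha>\tfrac12$, and its $k$-th Fourier coefficient $x^{(i)}_k$ is the $i$-th component of $\x_k$. Bernstein's Theorem (\cite[Theorem VI.3.1]{zygmund2002trigonometric}), applied to each $x^{(i)}$, gives $\sum_{k\in\Z}|x^{(i)}_k|<\infty$. Using $\|\x_k\|_2\le\|\x_k\|_1=\sum_{i=1}^n|x^{(i)}_k|$ and nonnegativity of all terms (so that Tonelli's theorem permits interchanging the sums),
\[
\sum_{k\in\Z}\|\x_k\|_2\ \le\ \sum_{k\in\Z}\sum_{i=1}^n|x^{(i)}_k|\ =\ \sum_{i=1}^n\Big(\sum_{k\in\Z}|x^{(i)}_k|\Big)\ <\ \infty,
\]
which is precisely $\{\|\x_k\|_2\}\in\ell_1$. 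Since the norm of the $k$-th term of $\sum_k\x_k e^{\i k\omega_0 t}$ is bounded by $\|\x_k\|_2$, the Weierstrass $M$-test yields absolute and uniform convergence of the series, and (by continuity of $\x$ together with uniqueness of Fourier coefficients) it converges to $\x(t)$.

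For part (2), the classical estimate for the order of magnitude of Fourier coefficients of a $\mathsf{Lip}(\alpha)$ function (\cite[II.4]{zygmund2002trigonometric}), applied to each component, gives $|x^{(i)}_k|=O(|k|^{-\alpha})$ as $|k|\to\infty$; real-valuedness makes positive and negative indices symmetric, so it is harmless to take $k\to\infty$. Because $\alpha>\tfrac12$, multiplying by $\sqrt{k}$ gives $\sqrt{k}\,|x^{(i)}_k|=O(k^{1/2-\alpha})\to0$, and since there are only finitely many components this is exactly the asserted componentwise decay $\sqrt{k}\,|\x_k|\to0$.

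I do not anticipate any genuine obstacle: with Bernstein's Theorem and the $O(|k|^{-\alpha})$ coefficient bound taken as given --- both are explicitly cited in the statement --- the vector-valued versions follow from finiteness of the dimension and Tonelli's theorem. Were a self-contained proof wanted, the only nontrivial ingredient would be Bernstein's Theorem itself, proved by a dyadic decomposition of the frequency range combined with Parseval's identity and Cauchy--Schwarz (this is where the threshold $\alpha>\tfrac12$ is forced); since it is quoted, nothing further is needed. The single point meriting a word of care is notational: ``$|\x_k|$'' and the estimate in (2) are to be read componentwise, so (2) is literally the scalar coefficient bound applied coordinate by coordinate rather than a statement about $\|\x_k\|_2$.
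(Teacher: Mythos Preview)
Your proposal is correct. Note, though, that the paper does not actually supply a proof of this lemma: both assertions are stated with inline citations to Zygmund (Bernstein's Theorem for part~(1), and the standard $O(|k|^{-\alpha})$ coefficient bound for $\mathsf{Lip}(\alpha)$ functions for part~(2)) and are treated as known facts. Your write-up simply makes explicit the routine componentwise reduction from the vector-valued statement to the scalar results being cited, which is exactly what the paper leaves implicit; there is no alternative argument to compare against.
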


	The following theorem can be understood as a special case of Lemma 1 of Sandberg et al.\ \cite{Sandberg:2006tk}, which pertains to a more general class of impulse response functions. The proof provided here is instead a concise frequency domain-based proof for dynamical systems with state space representations.
		
	\begin{theorem}
		Given system $\ltp = \sys{\A(t)}{\b(t)}{\mathbf c^T(t)}{}$ with $\A(t)$, $\b(t)$ and $\mathbf c(t)$ having components that are 
		T-periodic and in $\mathsf{Lip}(\alpha)$ with $\alpha>\frac12$.  If the Floquet-transformed state space matrix $\Q$ is Hurwitz, then $\left\|\ltp\right\|_{\H2}<\infty$.
	\end{theorem}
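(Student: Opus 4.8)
The plan is to pass to the Floquet realization \eqref{postFloquet} and reduce the claim to a convolution estimate on sequences. Write $\bfl(t) := \P^{-1}(t)\b(t)$ and $\cfl^T(t) := \c^T(t)\P(t)$, so that in the transformed coordinates $\ltp = \sys{\Q}{\bfl(t)}{\cfl^T(t)}{}$, and recall that the impulse response is an input/output object, hence unchanged by the state transformation: $g(t,\tau) = \cfl^T(t)\,e^{\Q(t-\tau)}\,\bfl(\tau)$ for $t\ge\tau$. The first step is to verify that $\bfl$ and $\cfl$ still have components in $\mathsf{Lip}(\alpha)$ with the same $\alpha>\tfrac12$. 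Since $\A(t)$ is continuous, any fundamental matrix $\mathbf X(t)$ solving $\dot{\mathbf X}=\A(t)\mathbf X$ is $C^1$ on $[0,T]$, hence Lipschitz there; consequently $\P(t)=\mathbf X(t)\mathbf X(0)^{-1}e^{-\Q t}$ and $\P^{-1}(t)=e^{\Q t}\mathbf X(0)\mathbf X(t)^{-1}$ are Lipschitz and $T$-periodic, and on the compact period a product of a Lipschitz function with a $\mathsf{Lip}(\alpha)$ function is again in $\mathsf{Lip}(\alpha)$ (for $\alpha\ge1$ all functions in sight are constant and there is nothing to prove).

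Next I would expand $\cfl(t)=\sum_{m\in\Z}\cfl_m e^{\i m\omega_0 t}$ and $\bfl(t)=\sum_{\ell\in\Z}\bfl_\ell e^{\i\ell\omega_0 t}$; by Lemma~\ref{lemma:fourierpwsmooth}(1) (Bernstein's theorem), both series converge absolutely and uniformly and $\{\|\cfl_m\|_2\},\{\|\bfl_\ell\|_2\}\in\ell_1$ — this is precisely where the hypothesis $\alpha>\tfrac12$ is essential. Substituting $\tau=t-r$ into the kernel gives $g(t,t-r)=\cfl^T(t)\,e^{\Q r}\,\bfl(t-r)$; inserting the two Fourier series (the resulting double series converges absolutely, uniformly in $t$, for each fixed $r\ge0$, since $\|e^{\Q r}\|_2$ is then a finite constant) and collecting the coefficient of $e^{\i k\omega_0 t}$ yields the subsystem
$$g_k(r) \;=\; \sum_{\ell\in\Z}\cfl_{k-\ell}^{\,T}\,e^{\Q r}\,\bfl_\ell\,e^{-\i\ell\omega_0 r}.$$
Using $\|\hat g_k\|_{\H2}=\big(\int_0^\infty|g_k(r)|^2\,dr\big)^{1/2}$ (the time-domain form in \eqref{eq:h2normlti}), the triangle inequality in $L^2(0,\infty)$ (Minkowski), and the crude bound $|\cfl_{k-\ell}^T e^{\Q r}\bfl_\ell|\le\|\cfl_{k-\ell}\|_2\,\|e^{\Q r}\|_2\,\|\bfl_\ell\|_2$, I get
$$\|\hat g_k\|_{\H2} \;\le\; \kappa\sum_{\ell\in\Z}\|\cfl_{k-\ell}\|_2\,\|\bfl_\ell\|_2, \qquad \text{where}\quad \kappa := \Big(\int_0^\infty\|e^{\Q r}\|_2^2\,dr\Big)^{1/2}<\infty$$
since $\Q$ is Hurwitz.

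Finally, set $a_k:=\|\cfl_k\|_2$ and $x_k:=\|\bfl_k\|_2$; both are in $\ell_1$, so by Lemma~\ref{thm:borwein} the convolution $y_k=\sum_\ell a_{k-\ell}x_\ell$ lies in $\ell_1\subseteq\ell_2$ with $\|y\|_{\ell_1}\le\|a\|_{\ell_1}\|x\|_{\ell_1}$. Combining with \eqref{eq:LTIH2sub},
$$\|\ltp\|_{\H2}^2 \;=\; \sum_{k\in\Z}\|\hat g_k\|_{\H2}^2 \;\le\; \kappa^2\sum_{k\in\Z}y_k^2 \;=\; \kappa^2\|y\|_{\ell_2}^2 \;\le\; \kappa^2\|y\|_{\ell_1}^2 \;\le\; \kappa^2\Big(\sum_{m}\|\cfl_m\|_2\Big)^{2}\Big(\sum_{\ell}\|\bfl_\ell\|_2\Big)^{2} \;<\;\infty,$$
which proves the theorem. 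The step I expect to be the main obstacle is not any single inequality but the regularity bookkeeping: confirming that the Floquet input/output maps $\bfl,\cfl$ genuinely inherit $\mathsf{Lip}(\alpha)$ so that Bernstein's theorem applies, and then rigorously justifying the term-by-term extraction of the Fourier coefficient $g_k$ and the use of the triangle inequality in $L^2(0,\infty)$, so that the pointwise bound on $\|\hat g_k\|_{\H2}$ is legitimate rather than merely formal.
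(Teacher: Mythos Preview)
Your proof is correct and follows essentially the same route as the paper's: bound $\|\hat g_k\|_{\H2}$ by a constant times the convolution $\sum_\ell\|\cfl_{k-\ell}\|_2\|\bfl_\ell\|_2$, then invoke Lemma~\ref{thm:borwein} and $\ell_1\subset\ell_2$. The only cosmetic differences are that you carry out the estimate in the time domain (with $\kappa=(\int_0^\infty\|e^{\Q r}\|_2^2\,dr)^{1/2}$) whereas the paper does it in the frequency domain (with $\|[s\I-\Q]^{-1}\|_{\H2}$, which is the same quantity up to the choice of matrix norm), and you are more explicit than the paper in checking that the Floquet-transformed input/output maps $\bfl,\cfl$ inherit the $\mathsf{Lip}(\alpha)$ regularity needed for Bernstein's theorem.
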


	\begin{proof}
		Without loss of generality, assume $\ltp$ is in the Floquet form. Let
			$$\c(t) = \sum_{k\in\Z} \c_k e^{-\i k\omega_0 t}~~\mbox{and}~~
			\b(t) = \sum_{k\in\Z} \b_k e^{-\i k\omega_0 t}$$ denote the Fourier expansions of $\c(t)$ and $\b(t)$, respectively.
		\caleb{Then from Sandberg (2006) \cite{Sandberg:2006tk} we know $\hat g_k(s) = \sum_{\ell\in\Z} \c_{k-\ell}^T[s_\ell\I - \Q]^{-1}\b_\ell$ with $s_\ell = s + \i \ell \omega_0$.} 
		\caleb{Additionally, $\left\|\ltp\right\|_{\H2}^2 = \displaystyle\sum_{ k\in\Z}\left\|\hat g_k\right\|_{\H2}^2$ from \eqref{eq:LTIH2sub}}.
		First, show that $\|\hat g_k\|_{\H2} < \infty$:
		\begin{align*}
			\left\|\hat g_k\right\|_{\H2} &\leq \sum_{\ell\in\Z}\|\c_{ i-\ell}^T [s_\ell\I - \Q]^{-1}\b_\ell\|_{\H2}
			\leq \sum_{\ell\in\Z}\|\c_{k-\ell}\|_2\|\b_\ell\|_2 \|[s_\ell\I - \Q]^{-1}\|_{\H2}\\
			&=\|[s\I - \Q]^{-1}\|_{\H2} \sum_{\ell\in\Z} \|\c_{k-\ell}\|_2\|\b_\ell\|_2,
		\end{align*}
where in the last step we used 
$ \|[s_\ell\I - \Q]^{-1}\|_{\H2} = \|[s \I - \Q]^{-1}\|_{\H2}$. 
\serkan{One can immediately observe that   
\begin{align*}
			\left \{\sum_{\ell\in\Z} \|\c_{k-\ell}\|_2\|\b_\ell\|_2\right \}_{k=-\infty}^\infty = \Big\{\|\c_k\|_2\Big\}_{k=-\infty}^\infty * \Big\{\|\b_k\|_2\Big\}_{k=-\infty}^\infty \in \ell_1.
		\end{align*}
		where ``\,*\," denotes the convolution operator.
		Note that $\{\|\c_k\|_2\},\{\|\b_k\|_2\}\in\ell_1$ by Lemma \ref{lemma:fourierpwsmooth}. Therefore, it follows 
		from Lemma \ref{thm:borwein} that the sequence $\left \{\sum_{\ell\in\Z} \|\c_{k-\ell}\|_2\|\b_\ell\|_2\right \}$ is an $\ell_1$ sequence  indexed in $k$.}
		Since $\Q$ is Hurwitz, then $\|[s\I - \Q]^{-1}\|_{\H2}<\infty$. Thus, $\{\|\hat g_k\|_{\H2}\}$ forms an $\ell_1$ sequence indexed in $k$. Since $\ell_1\subset \ell_2$, $\|\ltp\|_{\H2}^2 = \displaystyle\sum_{k\in\Z}\|\hat g_k\|_{\H2}^2 < \infty$.
	\end{proof}

\subsection{Pole-Residue Representation of $\H2$ Inner Product}

An important result from \textsf{LTI} system theory is the representation of 
$\H2$ inner products and $\H2$ norms in terms of poles and residues as shown in \eqref{h2ip_res}. For 
 \textsf{SISO~LTI} systems, $G$ and $H$, that are stable, and assuming simple poles, $\{\lambda_i\}_{i=1}^n$ for $\bG(s)$
and $\{\mu_i\}_{i=1}^n$ for for $\bH(s)$, these inner product/norm representations simplify to:
\begin{align} \label{eq:h2poleressiso}
	\langle G,H \rangle_{\H2} &= \sum_{j=1}^n  \bG(-\mu_j){\normalfont\textsf{res}}[\bH(s),\mu_j] = \sum_{j=1}^n 
	\bH(-\lambda_j){\normalfont\textsf{res}}[\bG(s),\lambda_j], \\
	\mbox{and}~~~
\| G\|_{\H2}^2 &=  \sum_{i=1}^n \bG(-\lambda_j){\normalfont\textsf{res}}[\bG(s),\lambda_j].\label{eq:h2normpoleressiso}
\end{align}
Even though this formulation of the inner product is rarely used for computation,
it has been used in deriving optimality conditions for $\H2$ approximation, see
\cite{Gugercin:2008bc,Ant2010imr}.
\chris{In what follows, we extend this result to  \textsf{SISO~LTP} systems.}

\chris{In the \textsf{LTP} setting, the meromorphic functions in question no longer have a finite number of poles.  Although
the pole residue formulation of the inner product provided above extends naturally in the way one would hope, 
the proof of this extension} is considerably more difficult as the next theorem \caleb{shows}.

	\begin{theorem}
		\label{thm:h2poleresinnerprod}
		Let $\ltp = \sys{\Q}{\widetilde{\b}(t)}{\widetilde{\c}^T(t)}{}$ and $\ltph = \sys{\Q}{\b(t)}{\c^T(t)}{}$ denote two \textsf{LTP} systems sharing state space matrix $\Q$ with $\widetilde{\b}(t)$, $\widetilde{\c}^T(t)$, $\b(t)$, $\c^T(t)$  having components that are 
		 $T$-periodic and in $\mathsf{Lip}(\alpha)$ with $\alpha>\frac12$.
		 Let $\hat g_k,\hat h_k\in\H2$ be the \caleb{$k$-th} subsystems of $\ltp$ and $\ltph$ respectively where
		\begin{align*}
			\hat g_k(s) = \sum_{\ell\in\Z}\widetilde\c^T_{ k-\ell}[s_\ell\I-\Q]^{-1}\widetilde\b_\ell,
			\qquad
			\hat h_k(s) = \sum_{\ell\in\Z}\c^T_{ k-\ell}[s_\ell\I-\Q]^{-1}\b_\ell.
		\end{align*}
		Assume that the \caleb{eigenvalues $\{\lambda_j\}_{j=1}^n$ of $\Q$ are} in the open left-half plane and that the fundamental frequency $\omega_0$ bounds the largest imaginary component of the spectrum of $\Q$.
		That is,
		\begin{align*}
			\max_j |\serkan{\mathrm{Im}}\{\lambda_j(\Q)\}| < \omega_0.
		\end{align*}
		If $\Q$ has simple poles, then $\hat g_k$ and $\hat h_k$ have poles at 
		$\caleb{\lambda_j^{(\ell)} = \lambda_j(\Q) - \i \ell \omega_0, \ \ell\in\Z}$
		and the inner product, $\langle \hat g_k, \hat h_k \rangle_{\H2}$ can be written as
		\begin{align}
			\label{eq:h2poleresinnerprod}
			\langle \hat g_k, \hat h_k\rangle_{\H2} = \sum_{\ell\in\Z} \sum_{j=1}^n \hat g_k(-\lambda_j^{(\ell)}){\normalfont\textsf{res}}[\hat h_k(s),\lambda_j^{(\ell)}].
		\end{align}
		The $\H2$-norm of $\hat g_k(s)$ is given by
		\begin{align}
			\label{eq:LTIH2sub_poleres}
			\| \hat g_k \|^2_{\H2} = \sum_{\ell\in\Z}\sum_{j=1}^n \hat g_k(-\lambda_j^{(\ell)}){\normalfont\textsf{res}}[ \hat g_k(s),\lambda_j^{(\ell)}].
		\end{align}
	\end{theorem}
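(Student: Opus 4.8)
The plan is to reduce the claim to the finite-dimensional \textsf{LTI} pole--residue identity by truncating the Fourier expansions that define the subsystems, and then letting the truncation order grow.

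\textbf{Truncation.} For each integer $N\ge 0$ put
\[
	\hat g_k^{[N]}(s) = \sum_{|\ell|\le N}\widetilde{\c}_{k-\ell}^T[s_\ell\I-\Q]^{-1}\widetilde{\b}_\ell, \qquad \hat h_k^{[N]}(s) = \sum_{|\ell|\le N}\c_{k-\ell}^T[s_\ell\I-\Q]^{-1}\b_\ell .
\]
Each is a strictly proper rational function (hence lies in $\H2$) whose poles are among the points $\lambda_j^{(\ell)}=\lambda_j-\i\ell\omega_0$, $|\ell|\le N$, $1\le j\le n$. Simplicity of the spectrum of $\Q$ together with the separation hypothesis $\max_j|\mathrm{Im}\{\lambda_j(\Q)\}|<\omega_0$ forces these points to be pairwise distinct --- within a fixed level $\ell$ this is distinctness of the eigenvalues $\lambda_j$, while a coincidence across levels would require $\lambda_j-\lambda_{j'}$ to be a nonzero integer multiple of $\i\omega_0$, which the hypothesis rules out --- so every pole of $\hat h_k^{[N]}$ is simple and is fed by exactly one summand, giving $\mathrm{res}[\hat h_k^{[N]}(s),\lambda_j^{(\ell)}]=\c_{k-\ell}^T\,\Pi_j\,\b_\ell$, where $\Pi_j$ is the residue of $(w\I-\Q)^{-1}$ at $w=\lambda_j$; crucially this value is \emph{independent of $N$} once $N\ge|\ell|$. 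Applying the \textsf{SISO} specialization \eqref{eq:h2poleressiso} of the cited \textsf{LTI} residue theorem to the stable, simple-pole pair $\hat g_k^{[N]},\hat h_k^{[N]}$ gives
\[
	\langle \hat g_k^{[N]},\hat h_k^{[N]}\rangle_{\H2}=\sum_{|\ell|\le N}\sum_{j=1}^n \hat g_k^{[N]}(-\lambda_j^{(\ell)})\,\mathrm{res}[\hat h_k^{[N]}(s),\lambda_j^{(\ell)}].
\]

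\textbf{Passing to the limit.} On the left, the coefficient-norm sequences $\{\|\b_\ell\|_2\}$, $\{\|\c_\ell\|_2\}$, $\{\|\widetilde{\b}_\ell\|_2\}$, $\{\|\widetilde{\c}_\ell\|_2\}$ all lie in $\ell_1$ by Lemma~\ref{lemma:fourierpwsmooth}, and $\|[s_\ell\I-\Q]^{-1}\|_{\H2}=\|[s\I-\Q]^{-1}\|_{\H2}<\infty$, so
\[
	\|\hat h_k-\hat h_k^{[N]}\|_{\H2}\le\|[s\I-\Q]^{-1}\|_{\H2}\sum_{|\ell|>N}\|\c_{k-\ell}\|_2\|\b_\ell\|_2 ,
\]
which vanishes as $N\to\infty$ (tail of a convergent series); likewise $\hat g_k^{[N]}\to\hat g_k$ in $\H2$, so by continuity of the inner product the left-hand side converges to $\langle\hat g_k,\hat h_k\rangle_{\H2}$. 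On the right the residues have already stabilized, and $\hat g_k^{[N]}(-\lambda_j^{(\ell)})\to\hat g_k(-\lambda_j^{(\ell)})$ for each fixed $(\ell,j)$ because the defining series converges --- the point $-\lambda_j^{(\ell)}$ has positive real part and hence is never a pole of $\hat g_k$. To move the limit inside the double sum we invoke dominated convergence: the shifted resolvents $[(-\lambda_j+\i p\omega_0)\I-\Q]^{-1}$ are bounded in norm by a constant $C_j$ \emph{uniformly in} $p\in\Z$ (their eigenvalues $-\lambda_j-\lambda_m+\i p\omega_0$ stay at distance at least $2\min_m|\mathrm{Re}\,\lambda_m|>0$ from the imaginary axis), so
\[
	\bigl|\hat g_k^{[N]}(-\lambda_j^{(\ell)})\,\mathrm{res}[\hat h_k^{[N]}(s),\lambda_j^{(\ell)}]\bigr|\ \le\ C_j\Bigl({\textstyle\sum_{m\in\Z}}\|\widetilde{\c}_{k-m}\|_2\|\widetilde{\b}_m\|_2\Bigr)\,\|\Pi_j\|\,\|\c_{k-\ell}\|_2\|\b_\ell\|_2 ,
\]
and this majorant is summable over $(\ell,j)$ (a convolution of $\ell_1$ sequences is $\ell_1$, by Lemma~\ref{thm:borwein}) and free of $N$. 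Hence the right-hand side converges to $\sum_{\ell\in\Z}\sum_{j=1}^n\hat g_k(-\lambda_j^{(\ell)})\,\mathrm{res}[\hat h_k(s),\lambda_j^{(\ell)}]$, which proves \eqref{eq:h2poleresinnerprod}; taking $\ltph=\ltp$ so that $\hat h_k=\hat g_k$ then yields \eqref{eq:LTIH2sub_poleres}.

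\textbf{The main obstacle.} The delicate point is the interchange of the limit with the infinite sum on the right: one must exhibit a single, $(\ell,j)$-summable majorant valid for every $N$. This is precisely what the $\mathsf{Lip}(\alpha)$, $\alpha>\tfrac12$, assumption is for --- via Bernstein's theorem (Lemma~\ref{lemma:fourierpwsmooth}) it delivers the $\ell_1$ summability of the Fourier coefficients --- while the spectral hypotheses are used both to keep the poles $\lambda_j^{(\ell)}$ simple and distinct (so that each residue peels off exactly one term) and, together with stability of $\Q$, to force the shifted resolvents to be uniformly bounded. One bookkeeping caveat: the subsystems $\hat g_k$ generally have complex coefficients, so the factor $\hat g_k(-\lambda_j^{(\ell)})$ is to be read, exactly as in the cited \textsf{LTI} theorem, as $\overline{\hat g_k}(-\lambda_j^{(\ell)})=\overline{\hat g_k(-\overline{\lambda_j^{(\ell)}})}$; this conjugation is inherited verbatim and affects none of the estimates above.
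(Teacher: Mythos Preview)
Your argument is correct and follows a genuinely different path from the paper.  The paper attacks the inner product directly by contour integration: for the scalar case it encloses the poles $\lambda^{(\ell)}$, $|\ell|\le M$, in a rectangular contour $\Gamma_M$ with one side on the imaginary axis, and then drives the contributions of the three remaining sides to zero as $M\to\infty$ using ML-type estimates.  The side contours parallel to the real axis are the delicate ones, and the paper handles them with the pointwise decay statement $\sqrt{k}\,|\x_k|\to 0$ from Lemma~\ref{lemma:fourierpwsmooth}(2) applied to products of Fourier coefficients.  The $n\ge 2$ case is then argued by superposition once the separation hypothesis guarantees simple poles.

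Your route---truncate the defining series to finitely many terms, invoke the already-established \textsf{LTI} pole--residue formula \eqref{eq:h2poleressiso} for the truncations, and pass to the limit on both sides---is more economical in its hypotheses: you only use the $\ell_1$ summability from Lemma~\ref{lemma:fourierpwsmooth}(1) (Bernstein), never the pointwise decay rate in part~(2).  The uniform resolvent bound you extract from stability of $\Q$ cleanly furnishes the $N$-free majorant needed for dominated convergence, and the observation that each residue stabilizes once $N\ge|\ell|$ makes the limit on the right transparent.  What the paper's approach buys in exchange is self-containment: it does not lean on the finite-dimensional residue theorem as a black box, and its contour picture makes visible why the half-integer vertical offsets (avoiding pole collisions with the contour) matter.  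Your closing caveat about the conjugation convention for complex-coefficient subsystems is well taken and is implicit in the paper's use of $\hat g_k(-s)$ inside the contour integral.
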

			
\begin{proof}
	Fix $k$.
	We note that residue calculus immediately provides the pole residue formulation for subsystems with a finite number of poles.
	However, the subsystems $\hat g_k$ and $\hat h_k$ we consider have a countable number of poles that tesselate along the imaginary axis, \caleb{$\lambda_j^{(\ell)} = \lambda_j(\Q) -\i \ell \omega_0$ for $\ \ell\in\Z$}.
	
	First, consider the case where $\Q$ is scalar: $n=1$, $\mathbf Q = \lambda$ with $\textrm{Re}\{\lambda\} < 0$.
	Without loss of generality we assume $\omega_0 = 1$. 
	Then \caleb{$\lambda^{(\ell)} = \lambda - \ell\i$.}
	Define $\phi_\ell = c_{ k-\ell} b_\ell$ and $\widetilde \phi_\ell = \widetilde c_{ k-\ell} \widetilde b_\ell$.
	\serkan{Thus, we obtain}
	\begin{align*}
		\caleb{\hat h_k(s) = \sum_{\ell\in\Z} \frac{\phi_\ell}{s-(\lambda-\ell\i)}
		\quad \mbox{and} \quad
		\hat g_k(s) = \sum_{\ell\in\Z} \frac{\widetilde \phi_\ell}{s-(\lambda-\ell\i)}}
	\end{align*}
	\serkan{with $\{\phi_\ell\},\{\widetilde \phi_\ell\}\in\ell_1$.}
			
	Consider the rectangular \caleb{contour $\Gamma_M$ defined} by the vertices $(0,M+\frac{1}{2})$, $(0,-M-\frac{1}{2})$, $(-M,-M-\frac{1}{2})$, $(-M,M+\frac{1}{2})$, for integer $M > |\lambda|$, so that $\Gamma_M$ contains $2M+1$ poles.
	Define $\Gamma_M^{(2)}$, $\Gamma_M^{(2)}$, $\Gamma_M^{(3)}$, $\gamma_M$ to represent the the top, left, bottom, and right contours of the rectangle respectively (see Figure \ref{fig:contour}).
	Finally, write $\Gamma_M = \Gamma_M^{(1)} + \Gamma_M^{(2)} + \Gamma_M^{(3)} + \gamma_M$ to represent the entire contour.
			
	\begin{figure}[H]
		\centering
		\includegraphics[scale=1.0]{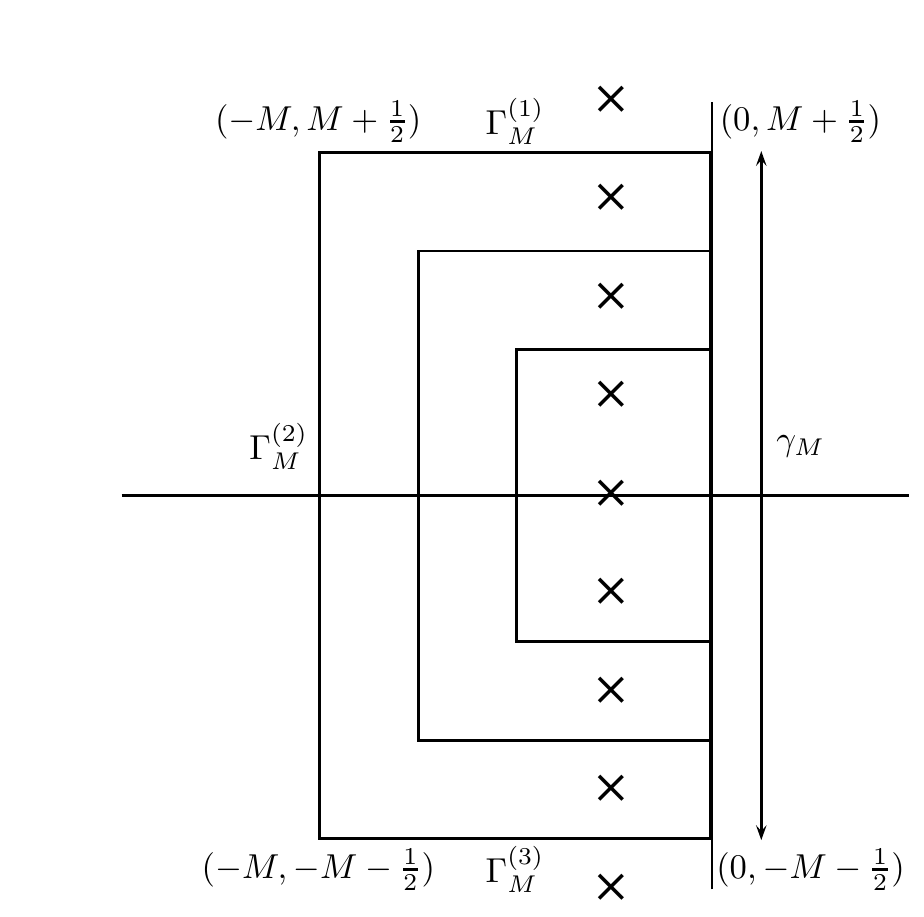}
		\caption{Contour integration path, $\Gamma_M$}
		\label{fig:contour}
	\end{figure}
			
	\noindent From residue calculus we have
	$\displaystyle \int_{\Gamma_M} \hat g_k(-s) \hat h_k(s) ds = \sum_{\ell=-M}^M \hat g_k(-\lambda^{(\ell)}){\normalfont\textsf{res}}[\hat h_k(s),\lambda^{(\ell)}]$.
	We first show that $\int_{\Gamma_M^{(2)}} \hat g_k(-s) \hat h_k(s) ds \to 0$ as $M\to\infty$.
	Observe  that $\Gamma_M^{(2)} = \{z = -M + \i y:-M-\frac{1}{2} \leq y \leq M + \frac{1}{2}\}$.
	Then,
	\begin{align*}
		|\hat g_k(-M + \i y) \hat h_k(M-\i y)|
		&\leq \left |\sum_{\ell\in\Z}\frac{\widetilde \phi_\ell}{-M-\lambda + \i(y\caleb{+\ell})}\right | \left | \sum_{\ell\in\Z} \frac{\phi_\ell}{M-\lambda+\i(-y\caleb{+\ell})}\right |\\
		&\leq \frac{1}{M^2-\lambda^2} \sum_{\ell\in\Z} \left|  \widetilde \phi_\ell\right |  \sum_{\ell\in\Z} \left | \phi_\ell\right |.
	\end{align*}
	\caleb{We know that if $f(s)$ is continuous and bounded on the finite contour $\Gamma$, then}
	\begin{align}
		\label{eq:MLestimate}
		\left |\int_\Gamma f(s) ds \right | \leq \textrm{length}(\Gamma) \sup_{s\in\Gamma} |f(s)|.
	\end{align}
	\caleb{Therefore, using \eqref{eq:MLestimate}, we can bound the contour integral over $\Gamma_M^{(2)}$ with $\textrm{length}(\Gamma_M^{(2)}) = 2M+1$,}
	$$\left | \int_{\Gamma_M^{(2)}} \hat g_k(-s) \hat h_k(s) ds\right | \leq \frac{2M+1}{M^2-\lambda^2} \sum_{\ell\in\Z} \left |\widetilde \phi_\ell\right |\sum_{\ell\in\Z} \left |\phi_\ell\right | \to 0 \quad \mbox{as} \quad M\to \infty.$$
	Next, we show that $\int_{\Gamma_M^{(1)}} \hat g_k(-s) \hat h_k(s)ds \to 0$ as $M\to\infty$:
	$$\Gamma_M^{(1)} = \{z=x+\i(M+\frac{1}{2}):-M\leq x\leq 0\}.$$
	Then for $s\in \Gamma_M^{(1)}$,
	\begin{align*}
		|\hat g_k(-s) \hat h_k(s)|&=|\hat g_k(x+\i(M+1/2)) \hat h_k(-x-\i(M+1/2))|\\
		&= \left |\sum_{\ell\in\Z} \frac{\widetilde \phi_\ell}{x-\lambda+\i(M+\frac{1}{2}\caleb{+\ell})} \right| \left|\sum_{m\in\Z}\frac{\phi_m}{-x-\lambda+\i(-M-\frac{1}{2}\caleb{+\ell})}\right|\\
		&\leq \sum_{\ell\in\Z}\frac{|\widetilde \phi_\ell|}{|M+\frac{1}{2}\caleb{+\ell}|} \sum_{m\in\Z}\frac{|\phi_m|}{|M+\frac{1}{2}+m|}.
	\end{align*}
	Note that $\mbox{length}(\Gamma_M^{(1)}) = M.$
	The inequality \eqref{eq:MLestimate} yields
	\begin{align*}
		\left |\int_{\Gamma_M^{(1)}} \hat g_k(-s) \hat h_k(s)ds\right | \leq M \sum_{\ell\in\Z}\frac{|\widetilde \phi_\ell|}{|M+\frac{1}{2}\caleb{+\ell}|} \sum_{m\in\Z}\frac{|\phi_m|}{|M+\frac{1}{2}+m|}.
	\end{align*}
	\noindent We need to show that the quantity on the right approaches $0$ as $M\to\infty$.
	To accomplish this, we perform a change of variable on the first summation,
	\begin{align*}
		M \sum_{\ell\in\Z}\frac{|\widetilde \phi_\ell|}{|M+\frac{1}{2}\caleb{+\ell}|} = M\sum_{\hat\ell\in\Z}\frac{|\caleb{\widetilde \phi_{\hat \ell-M}}|}{|\hat\ell+\frac{1}{2}|} \leq 2M \sum_{\hat\ell\in\Z} | \caleb{\widetilde \phi_{\hat\ell - M}}|.
	\end{align*}
	Then we can pass the limit,
	\begin{align*}
		\lim_{M\to\infty} 2M \sum_{\hat\ell\in\Z} | \caleb{\widetilde \phi_{\hat\ell-M}} | = 	2 \sum_{\hat\ell\in\Z} \lim_{M\to\infty} M | \caleb{\widetilde \phi_{\hat\ell-M}} | = 0,
	\end{align*}
	\noindent since
$		M | \caleb{\widetilde \phi_{\hat \ell-M}}| = M |\caleb{\widetilde c_{k-(\hat\ell-M)} \widetilde b_{\hat\ell-M}}| \to 0$
	by Lemma \ref{lemma:fourierpwsmooth}.

	\noindent We can employ a similar argument for $\Gamma_N^{(3)}$ to show that
	\begin{align*}
		\left |\int_{\Gamma_M^{(3)}} \hat g_k(-s) \hat h_k(s)ds\right | \to 0, \qquad \mbox{as } M\to\infty.
	\end{align*}

	\noindent Then we obtain
	\begin{align*}
		\langle \hat g_k,\hat h_k \rangle_{\H2} &= \int_{-\infty}^\infty \hat g_k(-\i\omega) \hat h_k(\i\omega)d\omega 
	= \lim_{M\to\infty}\int_{\gamma_M} \hat g_k(-s) \hat h_k(s)ds \\	
		&= \sum_{\ell\in\Z} \hat g(-\lambda^{(\ell)}){\normalfont\textsf{res}}[\hat h_k(s),\lambda^{(\ell)}].
	\end{align*}
	To consider the case where $\Q\in\R^{n\times n}$ with $n\geq 2$, we need to guarantee that the poles of $\hat g_k$ and $\hat h_k$, i.e., 
		\caleb{$\lambda_j^{(\ell)} = \lambda_j(\Q) - \i \ell \omega_0$ for $\ell\in\Z,$}
	\noindent will not coincide and consequently remain simple.
	The assumption that
			$\max_j |\textrm{Im}\{\lambda_j\}| < \omega_0$
	\noindent is a sufficient condition to keep the poles simple and hence the case for finite dimension $n\geq 2$ is a natural generalization,
	\begin{align*}
		\langle \hat g_k,\hat h_k \rangle_{\H2}
		= \sum_{j=1}^n \sum_{\ell\in\Z} \hat g(-\lambda_j^{(\ell)}){\normalfont\textsf{res}}[\hat h_k(s),\lambda_j^{(\ell)}].
	\end{align*}
The expression for the $\H2$-norm follows immediately. 
\end{proof}

Now we are in a position to give a result analogous to \eqref{eq:h2poleressiso}, but for  \textsf{LTP} systems, representing the $\H2$ inner product and norm using poles and residues.
\begin{theorem}  \label{thm:H2innerLTPpoleres}
Let $\ltp$ and $\ltph$ be two \textsf{LTP} systems as in Theorem \ref{thm:h2poleresinnerprod} sharing the same state-matrix $\bQ$ with $\widetilde{\b}(t)$, $\widetilde{\c}^T(t)$, $\b(t)$, $\c^T(t)$  having components that are 
		 $T$-periodic and in $\mathsf{Lip}(\alpha)$ with $\alpha>\frac12$.
Let $\hat g_k,\hat h_k\in\H2$ be the $k$th subsystems of $\ltp$ and $\ltph$ respectively.
Assume 
  $|Im\{\lambda_j(\Q)\}| < \omega_0$ where  $\omega_0$ denotes the fundamental frequency of $\ltp$ and $\ltph$. Then,
\begin{equation}  \label{eq:prh2inner}
\langle \ltp, \ltph \rangle_{\H2} = 
 \sum_{ k \in \Z} \sum_{\ell\in\Z} \sum_{j=1}^n \hat g_k(-\lambda_j^{(\ell)}){\normalfont\textsf{res}}[\hat h_k(s),\lambda_j^{(\ell)}]
\end{equation}
 and
 	\begin{equation} \label{eq:prh2norm}
		\left\|\ltp\right\|_\H2^2 =  \sum_{k \in \Z} \sum_{\ell\in\Z}\sum_{j=1}^n \hat g_k(-\lambda_j^{(\ell)}){\normalfont\textsf{res}}[ \hat g_k(s),\lambda_j^{(\ell)}],
	\end{equation}
 where $\lambda_j^{(\ell)} = -\lambda_j + \ell\i$ is as defined in Theorem \ref{thm:h2poleresinnerprod}.
\end{theorem}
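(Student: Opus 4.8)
The plan is to obtain this theorem by simply composing the two preceding results, since the hard analytic work has already been done. First I would invoke Theorem~\ref{thm:h2innerproduct}, which expresses the periodic $\H2$ inner product as the sum over the subsystem index $k$ of ordinary \textsf{LTI} $\H2$ inner products, $\langle \ltp,\ltph\rangle_{\H2} = \sum_{k\in\Z}\langle \hat g_k,\hat h_k\rangle_{\H2}$. Then, for each fixed $k$, I would observe that the pair $\hat g_k,\hat h_k$ satisfies exactly the hypotheses of Theorem~\ref{thm:h2poleresinnerprod}: both subsystems are built from the shared matrix $\Q$, whose eigenvalues are simple, lie in the open left half-plane, and satisfy $\max_j|\mathrm{Im}\{\lambda_j(\Q)\}|<\omega_0$, and the Fourier coefficients of $\b(t),\c(t),\widetilde\b(t),\widetilde\c(t)$ inherit the needed summability from the $\mathsf{Lip}(\alpha)$, $\alpha>\tfrac12$, assumption via Lemma~\ref{lemma:fourierpwsmooth}. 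Applying \eqref{eq:h2poleresinnerprod} term by term then replaces each $\langle \hat g_k,\hat h_k\rangle_{\H2}$ by $\sum_{\ell\in\Z}\sum_{j=1}^n \hat g_k(-\lambda_j^{(\ell)})\,\mathsf{res}[\hat h_k(s),\lambda_j^{(\ell)}]$, and substituting into the subsystem sum yields the claimed triple-sum identity \eqref{eq:prh2inner}. The norm identity \eqref{eq:prh2norm} is then the specialization $\ltph=\ltp$ (equivalently $\hat h_k=\hat g_k$ for all $k$), invoking \eqref{eq:LTIH2sub_poleres} in place of \eqref{eq:h2poleresinnerprod}.

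The one point that requires genuine care — and what I expect to be the main obstacle — is showing that the iterated triple series is meaningful, so that the composition of the two theorems is legitimate rather than merely formal. Here I would reuse the estimate already established in the proof of the boundedness theorem above: for systems in Floquet form one has $\|\hat g_k\|_{\H2}\le \|[s\I-\Q]^{-1}\|_{\H2}\,\bigl(\{\|\widetilde\c_k\|_2\}*\{\|\widetilde\b_k\|_2\}\bigr)_k$, and by Lemma~\ref{lemma:fourierpwsmooth} together with Lemma~\ref{thm:borwein} the sequences $\{\|\hat g_k\|_{\H2}\}_{k\in\Z}$ and $\{\|\hat h_k\|_{\H2}\}_{k\in\Z}$ both lie in $\ell_1$. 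A Cauchy--Schwarz bound on each term, $|\langle \hat g_k,\hat h_k\rangle_{\H2}|\le \|\hat g_k\|_{\H2}\|\hat h_k\|_{\H2}$, then gives $\sum_{k\in\Z}|\langle\hat g_k,\hat h_k\rangle_{\H2}|<\infty$; combined with the absolute convergence of each inner double sum over $(\ell,j)$ furnished by Theorem~\ref{thm:h2poleresinnerprod}, this makes the triple series in \eqref{eq:prh2inner} absolutely convergent, so the order of summation is immaterial and the identity is unambiguous.

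Finally I would note that no new machinery is needed beyond the two cited theorems; the role of this result is organizational, assembling the subsystem decomposition of Theorem~\ref{thm:h2innerproduct} and the per-subsystem residue formula of Theorem~\ref{thm:h2poleresinnerprod} into a single pole--residue representation of the periodic $\H2$ inner product and norm that is the direct \textsf{LTP} analogue of \eqref{eq:h2poleressiso}--\eqref{eq:h2normpoleressiso}. I would also add a short remark that, under the simple-pole assumption, the residue $\mathsf{res}[\hat h_k(s),\lambda_j^{(\ell)}]$ is exactly the partial-fraction coefficient of $\hat h_k$ at that pole (as computed in the scalar case inside the proof of Theorem~\ref{thm:h2poleresinnerprod}), which is what makes the formula convenient for deriving interpolatory optimality conditions later.
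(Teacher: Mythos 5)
Your proposal is correct and follows essentially the same route as the paper: the published proof is a one-line composition of Theorem~\ref{thm:h2innerproduct} (the subsystem decomposition \eqref{eq:sandberhh2inner}) with Theorem~\ref{thm:h2poleresinnerprod} (the per-subsystem residue formulas \eqref{eq:h2poleresinnerprod} and \eqref{eq:LTIH2sub_poleres}). Your additional $\ell_1$/Cauchy--Schwarz argument for the absolute convergence of the triple series is a welcome refinement that the paper leaves implicit.
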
		
\begin{proof}
The result follows from combining Theorem \ref{thm:h2poleresinnerprod} and Theorem \ref{thm:h2innerproduct}. More specifically,  combining \eqref{eq:h2poleresinnerprod} with \eqref{eq:sandberhh2inner} yields \eqref{eq:prh2inner},  and  combining  
\eqref{eq:LTIH2sub_poleres} with \eqref{eq:LTIH2sub}
yields
 \eqref{eq:prh2norm}.
\end{proof}

Theorem \ref{thm:H2innerLTPpoleres} extends  the formulae \eqref{eq:h2poleressiso}-\eqref{eq:h2normpoleressiso} to the \textsf{LTP} setting. \serkan{However, note that the inner product formula in the \textsf{LTP} setting assumes that the systems share the same $\bQ$ matrix.} 
Even though formulae are more complicated including two infinite sums, it is analogous to the \textsf{LTI} case in the sense it is a weighted sum of the residues where the weight is the  subsystem transfer function evaluated at the mirror images of the poles. In the \textsf{LTI} case, this formulation  has lead to the interpolatory $\H2$ optimality conditions for model reduction \cite{Gugercin:2008bc}. This 
 is a potential research direction to pursue for the \textsf{LTP}  setting. 

\section{An $\H2$-based Model Reduction Framework for \textsf{LTP} systems}

Given the full-order  \textsf{LTP} system $\ltp$ as in \eqref{eq:FOM} with state-space dimension $n$, 
we seek a low-order  \textsf{LTP} approximant $\widetilde \ltp$ as in  \eqref{eq:ROM} with state-space dimension $r$, where
 $r\ll n$. The mumerical implementation of the proposed method presented below assumes the availability of a Floquet-transformed equivalent system. Since we know such a transformation exists for every \textsf{LTP} system, without loss of generality, we assume $\A(t) = \Q$ in this section. We note that in some prominent applications, e.g., in the analysis of mechanical systems with moving loads as considered in  \cite{Stykel_Vasilyev}, the full-order 
 system has already a constant  state-mapping matrix $\A(t) = \Q$. In other applications, e.g., nonlinear circuit modeling and analysis, one can compute the Floquet-transformation from the sampled simulation data as we did in our numerical example in Section \ref{sec:noda}.

Our approach uses a Petrov-Galerkin projection framework to perform reduction. Given the \textsf{LTP} system $\ltp$ as in \eqref{eq:FOM} with now $\A(t) = \Q$, we will construct two matrices $\bV,\bW \in \mathbb{R}^{n \times r}$ with $ \W^T \V = \mathbf{I}_r$ such that the reduced \textsf{LTP} system  in  \eqref{eq:ROM} is given by
	$\widetilde \A(t) = \W^T \Q \V$,
	$\widetilde \b(t) = \W^T \b(t)$ and
	$\widetilde \c(t) = \c(t) \V$.
 Our proposed approach converts the \textsf{LTP} model reduction problem into an equivalent \textsf{MIMO} \textsf{LTI} problem, utilizing the optimal, numerically efficient $\mathsf{H}_2$ model reduction techniques for \textsf{LTI} systems to construct projection subspaces, $\bV$ and $\bW$  for the \textsf{LTP} problem.  We are then able to provide an error bound for the approximation error in the original \textsf{LTP} setting.

Stykel and Vasilyev \cite{Stykel_Vasilyev} introduced a model reduction scheme for linear time-varying (LTV) systems that was developed independently of the earlier thesis \cite{magruder2013model} upon which the present work is based.  
Stykel and Vasilyev considered a special class of LTV systems describing mechanical systems with moving loads; 
creating a time-dependent linear dynamical system equivalent in structure to a $\Q$-$\b(t)$-$\c(t)$ realization considered in Theorem 
\ref{thm:h2poleresinnerprod} but for arbitrarily time-varying $\b(t)$ and $\c(t)$.   The approach we pursue here originates with the presumed
periodicity of $\b(t)$ and $\c(t)$, leading to a substantially different approach from that of \cite{Stykel_Vasilyev}.  We are able to take advantage of this presumed periodicity both analytically and computationally.   Indeed, the extended-$\H2$ space of \textsf{LTP} systems
%
%
 introduced above allows for familiar performance guarantees entirely analogous to those in the \textsf{LTI} setting.

\subsection{ Connection of \textsf{LTP} Systems to \textsf{LTI} \textsf{MIMO} Systems}

If $\b(t)$ and $\c(t)$ have finite order Fourier series expansions,
		\begin{align} \label{eq:ff}
			\b(t) = \sum_{k = -N}^N \b_k e^{-\i k\omega_0 t} 	
				~~~~\mbox{and}~~~
			\c(t) = \sum_{k = -N}^N \c_k e^{-\i k\omega_0 t},
		\end{align}
then the  \textsf{LTP} system has  finite number of nontrivial subsystems, each of which have only finite spectra.
This system has $2N+1$ inputs formed by the Fourier coefficients of $\b(t)$ and $2N+1$ outputs formed by the Fourier coefficients of $\c(t)$ resulting in $4N+1$ nontrivial subsystems. The next result connects the $\H2$ norm of such an \textsf{LTP}
system to the $\H2$ norm of  an \textsf{LTI} system.
	\begin{theorem}
		\label{thm:ltpmimobound}
		Let $\displaystyle \ltp = \sys{\Q}{\b(t)}{\c^T(t)}{\mathbf 0}$ be an \textsf{LTP} system where $\b(t)$ and $\c(t)$ have the finite Fourier expansions 	as in \eqref{eq:ff}.
		We define an associated \textsf{LTI} \textsf{MIMO} system 
		\begin{align}
			\label{eq:ltp2mimo}
			\ltih = \sys{\Q}{[\b_{-N},\ldots,\b_N]}{\left [ \c_{-N}, \ldots, \c_N \right ]^T}{\mathbf 0}.
		\end{align}
		Then the $\H2$ norm of the \textsf{LTP} dynamical system $\ltp$ can be bounded in terms of the $\H2$ norm of the \textsf{LTI} dynamical system $\ltih$, namely
		\begin{align*}
			\left\|\ltp\right\|_{\H2} \leq \sqrt{2N+1}\ \left\|\ltih\right\|_{\H2}.
		\end{align*}
	\end{theorem}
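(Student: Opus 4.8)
The plan is to express $\|\ltp\|_{\H2}^2$ via its subsystems using \eqref{eq:LTIH2sub}, then identify each subsystem transfer function $\hat g_k(s)$ with a specific linear combination of the entries of the MIMO transfer function $\bH(s)$ of $\ltih$, and finally control the number of times each entry of $\bH$ can contribute across the range of $k$. First I would recall from Sandberg \cite{Sandberg:2006tk} (as used already in the proof of the boundedness theorem) that with $s_\ell = s + \i\ell\omega_0$,
\begin{align*}
\hat g_k(s) = \sum_{\ell\in\Z} \c_{k-\ell}^T[s_\ell\I - \Q]^{-1}\b_\ell,
\end{align*}
but because $\b(t)$ and $\c(t)$ have only Fourier modes with index in $\{-N,\dots,N\}$, the sum over $\ell$ is finite: it runs only over those $\ell$ with $-N\le \ell\le N$ \emph{and} $-N\le k-\ell\le N$, and in particular $\hat g_k \equiv 0$ unless $|k|\le 2N$, so there are at most $4N+1$ nontrivial subsystems. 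The transfer function of $\ltih$ is the matrix $\bH(s)$ whose $(p,q)$ entry (indexing rows and columns from $-N$ to $N$) is $H_{p,q}(s) = \c_p^T[s\I-\Q]^{-1}\b_q$, a single-frequency, un-shifted LTI transfer function. Thus $\hat g_k(s) = \sum_{\ell} H_{k-\ell,\,\ell}(s_\ell)$, i.e., each nontrivial subsystem is a sum over an anti-diagonal $\{(k-\ell,\ell)\}$ of frequency-shifted copies of entries of $\bH$.

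Next I would pass to the $\H2$ norm. Since $\|F(\cdot + \i\ell\omega_0)\|_{\H2} = \|F\|_{\H2}$ for any $F\in\H2$ (the $\H2$ norm is shift-invariant along the imaginary axis, exactly as used in the boundedness proof), applying the triangle inequality to $\hat g_k = \sum_\ell H_{k-\ell,\ell}(\cdot + \i\ell\omega_0)$ gives
\begin{align*}
\|\hat g_k\|_{\H2} \le \sum_{\ell} \|H_{k-\ell,\ell}\|_{\H2}.
\end{align*}
However, a plain triangle-inequality bound followed by squaring would lose too much; instead I would use Cauchy--Schwarz in the form $\big(\sum_\ell a_\ell\big)^2 \le (\#\text{terms})\sum_\ell a_\ell^2$. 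For a fixed $k$, the number of valid $\ell$ is at most $2N+1$ (it is the number of lattice points on the anti-diagonal $p+q=k$ inside the square $[-N,N]^2$, which is $\le 2N+1$). Hence
\begin{align*}
\|\hat g_k\|_{\H2}^2 \le (2N+1)\sum_{\ell}\|H_{k-\ell,\ell}\|_{\H2}^2.
\end{align*}
Summing over $k\in\Z$ and interchanging the order of summation, each pair $(p,q)$ with $p=k-\ell$, $q=\ell$ — equivalently each entry of the $(2N+1)\times(2N+1)$ matrix $\bH$ — is counted exactly once (for the unique $k=p+q$). Therefore
\begin{align*}
\|\ltp\|_{\H2}^2 = \sum_{k\in\Z}\|\hat g_k\|_{\H2}^2 \le (2N+1)\sum_{p=-N}^N\sum_{q=-N}^N \|H_{p,q}\|_{\H2}^2 = (2N+1)\,\|\ltih\|_{\H2}^2,
\end{align*}
since $\|\ltih\|_{\H2}^2 = \sum_{p,q}\|H_{p,q}\|_{\H2}^2$ by \eqref{eq:h2normlti} (the Frobenius norm of $\bH(\i\omega)$ is the sum of the squared moduli of its entries). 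Taking square roots gives the claim.

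The main obstacle is organizing the double bookkeeping cleanly: getting the right count of anti-diagonal terms (so the constant is $\sqrt{2N+1}$ and not something larger like $\sqrt{4N+1}$) and then verifying that, after summing over all $k$, every entry $H_{p,q}$ is used exactly once rather than being double-counted — this relies on the bijection $(p,q)\leftrightarrow(k,\ell)=(p+q,q)$ restricted to the index square. A secondary technical point worth stating explicitly is the imaginary-axis shift-invariance $\|H_{p,q}(\cdot + \i\ell\omega_0)\|_{\H2}=\|H_{p,q}\|_{\H2}$, which is what lets the frequency shifts $s_\ell$ disappear from the estimate; this is exactly the fact already invoked in the preceding boundedness proof, so it may simply be cited. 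No delicate convergence issues arise here because the Fourier expansions are finite, so all sums are finite and the manipulations are purely algebraic combined with the elementary Cauchy--Schwarz inequality.
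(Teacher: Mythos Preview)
Your proposal is correct and follows essentially the same approach as the paper's proof: both express $\|\ltp\|_{\H2}^2$ via the subsystem decomposition \eqref{eq:LTIH2sub}, apply the inequality $\|\sum_{1}^{K} a_\ell\|^2 \le K\sum_{1}^{K}\|a_\ell\|^2$ to each $\hat g_k$ (with $K\le 2N+1$ terms), and then reindex the resulting double sum as $\sum_{i,j=-N}^{N}\|\c_i^T[s\I-\Q]^{-1}\b_j\|_{\H2}^2 = \|\ltih\|_{\H2}^2$. Your write-up is in fact slightly more explicit than the paper's about the shift invariance $\|H_{p,q}(\cdot+\i\ell\omega_0)\|_{\H2}=\|H_{p,q}\|_{\H2}$ and about the bijection $(p,q)\leftrightarrow(p+q,q)$ that makes each entry appear exactly once, but the argument is the same.
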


	\begin{proof}
		The above system, $\ltp$, has subsystems $\hat g_k(s)$  that can be written
		\begin{align*}
			\hat g_k(s) = \sum_{\ell=-N+k}^{N+k} \c^T_{k-\ell}[s_\ell \I - \Q]^{-1}\b_\ell~~\mbox{for}~~k=-N,\ldots,N.
		\end{align*}		
		Each of these terms are finite-dimensional \textsf{LTI} systems.
		Using equation \eqref{eq:LTIH2sub},
		\begin{align*}
			\left\|\ltp\right\|_{\H2}^2 &= \sum_{k=-2N}^{2N} \left\|\hat g_k\right\|_{\H2}^2
			=\sum_{k=-2N}^{2N} \left \| \sum_{\ell=-N+k}^{N+k} \c^T_{k-\ell}[s_\ell \I - \Q]^{-1}\b_\ell \right \|_{\H2}^2.
		\end{align*}
		Now we use that $\|a_1 + \ldots + a_K\|^2 \leq \serkan{K} (\|a_1\|^2 + \ldots \|a_K\|^2)$ (application of the triangle inequality and the property, $2ab \leq a^2 + b^2$ for $a,b\in\R$) to conclude
		\begin{align*}
			\left \|\sum_{\ell=-N+k}^{N+k} \c^T_{k-\ell}[s_\ell \I - \Q]^{-1}\b_\ell \right \|^2_{\H2} \leq (2N+1) \sum_{\ell=-N+k}^{N+k} \| \c^T_{k-\ell}[s_\ell \I - \Q]^{-1}\b_\ell\|^2_{\H2}.
		\end{align*} 
		Then we have
		\begin{align*}
			\left\|\ltp\right\|^2_{\H2} &\leq (2N+1) \sum_{k=-2N}^{2N} \sum_{\ell=-N+k}^{N+k} \| \c^T_{k-\ell}[s_\ell \I - \Q]^{-1}\b_\ell\|^2_{\H2}\\
			&= (2N+1) \sum_{i,j=-N}^{N} \| \c^T_{i}[s_j \I - \Q]^{-1}\b_j\|^2_{\H2},
		\end{align*}
		\noindent where we recognize the last term as the $\H2$ norm of the \textsf{MIMO} system,
		\begin{align*}
			\ltih = \sys{\Q}{[\b_{-N},\ldots,\b_N]}{ [\c_{-N},\ldots,\c_N ]^T }{\mathbf 0}.
		\end{align*}
		Thus, $\left\|\ltp\right\|^2_{\H2} \leq (2N+1)\left\|\ltih\right\|_{\H2}^2$.
	\end{proof}

	This result leads to the model order reduction algorithm for \textsf{LTP} systems outlined in the next section.

\subsection{A Model Reduction Method for a Special Case of \textsf{LTP} Systems}
	Inspired by Theorem \ref{thm:ltpmimobound}, here we introduce a model-reduction scheme that works for a special case of \textsf{LTP} systems with time-invariant $\A(t) = \Q$.
	We begin by introducing a performance guarantee of reduced-order \textsf{LTP} systems constructed from projection matrices $\V$ and $\W$.
	By Theorem \ref{thm:ltpmimobound}, we immediately obtain the following corollary.
	\begin{corollary}
	\label{thm:MIMObound}
		Given \caleb{an} \textsf{LTP} system of the form $\ltp = \sys{\Q}{\b(t)}{\c^T(t)}{\mathbf 0}$ where $\b(t)$ and $\c^T(t)$ have finite Fourier expansions as in \eqref{eq:ff}. Let $\V,\W\in\R^{n \times r}$ be projection matrices such that $\W^T \V = \I$ and define the reduced \textsf{LTP} system
		\begin{align*}
			\widetilde{\ltp} = \sys{\W^T\Q\V}{\W^T\b(t)}{\c^T(t)\V}{\mathbf 0} = \sys{\widetilde{\Q}}{\widetilde{\b}(t)}{\widetilde{\c}^T(t)}{\mathbf 0}.
		\end{align*}
		Then
		\begin{align}
			\label{eq:error_bd}
			\left\|\ltp-\widetilde \ltp\right\|_{\H2} \leq \sqrt{2N+1} \left\|\ltih-\widetilde \ltih\right\|_{\H2},
		\end{align}
		where $\ltih$ and $\widetilde \ltih$ are the \textsf{LTI} \textsf{MIMO} counterparts, i.e.,
		\begin{align*}
			\ltih &= \sys{\Q}{[\b_{-N},\ldots,\b_N]}{ [\c_{-N}, \ldots, \c_N]^T }{\mathbf 0}, ~~~\mbox{and}\\
		\widetilde	\ltih &= \sys{\W^T\Q\V}{\W^T[\b_{-N},\ldots,\b_N]}{ [\c_{-N}, \ldots, \c_N]^T\V }{\mathbf 0}.
		\end{align*}
	\end{corollary}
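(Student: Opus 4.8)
The plan is to reduce the corollary directly to Theorem~\ref{thm:ltpmimobound} by observing that the error system $\ltp - \widetilde\ltp$ is itself an \textsf{LTP} system of exactly the form handled there. First I would form the difference system. Both $\ltp$ and $\widetilde\ltp$ are \textsf{SISO} \textsf{LTP} systems with constant state matrices ($\Q$ and $\widetilde\Q = \W^T\Q\V$, respectively) and with $T$-periodic input/output maps whose Fourier expansions have the same finite bandwidth $N$: indeed $\widetilde\b(t) = \W^T\b(t) = \sum_{k=-N}^N (\W^T\b_k)e^{-\i k\omega_0 t}$ and $\widetilde\c(t) = \c(t)\V = \sum_{k=-N}^N (\V^T\c_k)^T e^{-\i k\omega_0 t}$, so the reduced system's Fourier coefficients are $\widetilde\b_k = \W^T\b_k$ and $\widetilde\c_k = \V^T\c_k$. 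The difference $\ltp - \widetilde\ltp$ then has a block-diagonal state matrix $\mathrm{diag}(\Q,\widetilde\Q) \in \R^{(n+r)\times(n+r)}$, input map with Fourier coefficients $\begin{bmatrix}\b_k \\ -\widetilde\b_k\end{bmatrix}$, and output map with Fourier coefficients $\begin{bmatrix}\c_k \\ \widetilde\c_k\end{bmatrix}$, each still supported on $|k|\le N$.

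Next I would apply Theorem~\ref{thm:ltpmimobound} verbatim to this difference system. That theorem says that any \textsf{LTP} system $\sys{\Q}{\b(t)}{\c^T(t)}{\mathbf 0}$ with bandwidth-$N$ Fourier data satisfies $\|\cdot\|_{\H2} \le \sqrt{2N+1}\,\|\cdot\|_{\H2}$ of its \textsf{LTI} \textsf{MIMO} counterpart, where the counterpart stacks the Fourier coefficients as input and output matrices and keeps the same state matrix. Applied to $\ltp - \widetilde\ltp$, the associated \textsf{MIMO} counterpart has state matrix $\mathrm{diag}(\Q,\widetilde\Q)$, input matrix $\begin{bmatrix}\b_{-N} & \cdots & \b_N \\ -\widetilde\b_{-N} & \cdots & -\widetilde\b_N\end{bmatrix}$, and output matrix $\begin{bmatrix}\c_{-N} & \cdots & \c_N \\ \widetilde\c_{-N} & \cdots & \widetilde\c_N\end{bmatrix}^T$. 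One then checks that this \textsf{LTI} system is precisely $\ltih - \widetilde\ltih$ with $\ltih$ and $\widetilde\ltih$ as defined in the statement: $\ltih$ has state $\Q$, input $[\b_{-N},\ldots,\b_N]$, output $[\c_{-N},\ldots,\c_N]^T$; and $\widetilde\ltih = \sys{\W^T\Q\V}{\W^T[\b_{-N},\ldots,\b_N]}{[\c_{-N},\ldots,\c_N]^T\V}{\mathbf 0}$, whose transfer function is the Petrov--Galerkin reduction of $\ltih$'s transfer function. Hence the \textsf{MIMO} counterpart of the \textsf{LTP} error system is exactly the \textsf{LTI} error system $\ltih - \widetilde\ltih$, and Theorem~\ref{thm:ltpmimobound} gives $\|\ltp - \widetilde\ltp\|_{\H2} \le \sqrt{2N+1}\,\|\ltih - \widetilde\ltih\|_{\H2}$, which is \eqref{eq:error_bd}.

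The main thing to verify carefully is the bookkeeping that the \textsf{MIMO} counterpart of the difference \textsf{LTP} system coincides, as an \textsf{LTI} transfer function, with $\ltih - \widetilde\ltih$ rather than with some other realization. This rests on two facts: (i) the map ``\textsf{LTP} system $\mapsto$ its \textsf{LTI} \textsf{MIMO} counterpart'' from Theorem~\ref{thm:ltpmimobound} is additive on systems sharing compatible (block-diagonalizable) state structure, because stacking Fourier coefficients is linear and the $\H2$ norm is computed subsystem-by-subsystem via \eqref{eq:LTIH2sub}; and (ii) the realization $\sys{\mathrm{diag}(\Q,\widetilde\Q)}{[\,\cdot\,]}{[\,\cdot\,]^T}{\mathbf 0}$ has transfer function equal to the difference of the transfer functions of $\ltih$ and $\widetilde\ltih$, since a block-diagonal state matrix with stacked $\pm$ input/output blocks realizes exactly a difference of transfer functions. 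Both are routine, so no genuine obstacle arises; the corollary is essentially immediate once the difference system is identified, and indeed the statement already flags this by saying ``we immediately obtain the following corollary.''

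\begin{proof}
The error system $\ltp - \widetilde\ltp$ is an \textsf{LTP} system with constant state matrix $\mathrm{diag}(\Q,\widetilde\Q)$ and with input and output maps $\begin{bmatrix}\b(t)\\-\widetilde\b(t)\end{bmatrix}$ and $\begin{bmatrix}\c(t)\\\widetilde\c(t)\end{bmatrix}$, whose Fourier expansions are supported on $|k|\le N$ since $\widetilde\b(t)=\W^T\b(t)$ and $\widetilde\c(t)=\c(t)\V$ inherit the bandwidth of $\b(t)$ and $\c(t)$. Applying Theorem~\ref{thm:ltpmimobound} to this \textsf{LTP} system yields
\begin{align*}
\left\|\ltp-\widetilde\ltp\right\|_{\H2} \leq \sqrt{2N+1}\,\left\|\ltih-\widetilde\ltih\right\|_{\H2},
\end{align*}
because the \textsf{LTI} \textsf{MIMO} counterpart of $\ltp-\widetilde\ltp$, formed by stacking the Fourier coefficients of its input and output maps as columns and keeping its state matrix, has transfer function equal to $\bG(s)-\widetilde\bG(s)$, where $\bG$ and $\widetilde\bG$ are the transfer functions of $\ltih$ and $\widetilde\ltih$. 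This is \eqref{eq:error_bd}.
\end{proof}
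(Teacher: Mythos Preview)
Your proposal is correct and matches the paper's intent exactly: the paper states only that ``By Theorem~\ref{thm:ltpmimobound}, we immediately obtain the following corollary,'' and the natural (and only reasonable) way to make this immediate is precisely what you do---apply Theorem~\ref{thm:ltpmimobound} to the error system $\ltp-\widetilde\ltp$ realized with block-diagonal state matrix $\mathrm{diag}(\Q,\widetilde\Q)$ and identify its \textsf{MIMO} counterpart as $\ltih-\widetilde\ltih$. You have simply made explicit the bookkeeping the paper leaves to the reader.
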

	Corollary \ref{thm:MIMObound} connects the model reduction error resulting from the analogous \textsf{LTI~MIMO} problem to the original \textsf{LTP} model reduction problem. This gives a lot of flexibility since the tools for   \textsf{LTI} model reduction are  well established and one can 	employ various model reduction methods, such as  
Balanced Truncation \cite{MulR76,Moo81}, optimal Hankel Norm Approximation \cite{glover1984all}, or Iterative Rational Krylov Algorithm (\textsf{IRKA}) \cite{Gugercin:2008bc}. However, since the error bound is given in terms of the $\H2$ error and \textsf{IRKA} produces locally optimal approximations in the $\H2$ sense, \textsf{IRKA} is a natural candidate for producing $\widetilde \ltih$ that minimizes $\|\ltih - \widetilde \ltih\|_{\H2}$. 	
	
	Corollary \ref{thm:MIMObound} analyzed the case for finite Fourier expansion.  The error analysis for the case with infinitely many Fourier coefficients is given next. 
	\begin{corollary}
	Given an \textsf{LTP} system of the form $\ltp = \sys{\Q}{\b(t)}{\c^T(t)}{\mathbf 0}$ with Fourier expansions $\b(t) = \sum_{-\infty}^\infty \b_k e^{\i k \omega_0 t}$ and $\c(t) = \sum_{-\infty}^\infty \c_k e^{\i k \omega_0 t}$, \serkan{let 
$\ltp_{[N]}$ denote the Fourier truncated \textsf{LTP} system
		\begin{align*}
			\ltp_{[N]} = \sys{\Q}{\sum_{k = -N}^N \b_k e^{\i k\omega_0 t}}{ \sum_{k = -N}^N \c^T_k e^{\i k\omega_0 t}}{\mathbf 0},
		\end{align*}
		and
		the system $\ltih_{[N]}$ is the associated \textsf{LTI} \textsf{MIMO} system for $\ltp_{[N]}$,
		\begin{align*}
			\ltih_{[N]} = \sys{\Q}{[\b_{-N},\ldots,\b_N ]}{ [\c_{-N}, \ldots, \c_N]^T }{\mathbf 0}.
		\end{align*}}
		
\noindent Moreover, let $\V,\W\in\R^{n \times r}$ be projection matrices such that $\W^T \V = \I$ and define the reduced system
	 \serkan{
		\begin{align*}
			\widetilde{\ltp} =  \sys{\W^T\Q\V}{\sum_{k = -N}^N \W^T\b_k e^{\i k\omega_0 t}}{ \sum_{k = -N}^N \c^T_k \V e^{\i k\omega_0 t}}{\mathbf 0}.
		\end{align*}}
		
		\noindent Then the dynamical system \caleb{error $\|\ltp-\widetilde \ltp_{[N]}\|_{\H2}$ is} bounded by
		\begin{align}
			\left\|\ltp-\widetilde  \ltp_{[N]}\right\|_{\H2} &\leq\left \|\ltp-\ltp_{[N]}\right\|_{\H2} + \left\|\ltp_{[N]} - \widetilde  \ltp_{[N]}\right\|_{\H2} \nonumber \\
			&\leq \left\|\ltp-\ltp_{[N]}\right\|_{\H2} + \sqrt{2N+1}\left\|\ltih_{[N]} - \widetilde{\ltih}_{[N]}\right\|_{\H2}, \label{eq:ub}
		\end{align}
		where
		 $\widetilde \ltih_{[N]}$ is the Petrov-Galerkin approximation of $\ltih_{[N]}$, i.e.,
		\begin{align*}
			\widetilde \ltih_{[N]} &= \sys{\W^T\Q\V}{\W^T [\b_{-N},\ldots,\b_N ]}{ [\c_{-N}, \ldots, \c_N]^T \V}{\mathbf 0}.
		\end{align*}
	\end{corollary}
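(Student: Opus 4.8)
The plan is to obtain the estimate by combining the triangle inequality for the periodic $\H2$ norm with a single invocation of Corollary~\ref{thm:MIMObound}. Recall that $\H2$ is an inner-product space whose elements are (generalized) impulse responses, with norm induced by \eqref{eqn:h2inner}, and that the difference of two \textsf{LTP} systems is again an \textsf{LTP} system whose impulse response is the difference of the two impulse responses. Writing $\ltp-\widetilde\ltp_{[N]} = (\ltp-\ltp_{[N]}) + (\ltp_{[N]}-\widetilde\ltp_{[N]})$ and using the triangle inequality then yields the first claimed inequality,
\begin{align*}
\left\|\ltp-\widetilde\ltp_{[N]}\right\|_{\H2} \le \left\|\ltp-\ltp_{[N]}\right\|_{\H2} + \left\|\ltp_{[N]}-\widetilde\ltp_{[N]}\right\|_{\H2}.
\end{align*}
If $\Q$ is Hurwitz and the components of $\b(t),\c(t)$ lie in $\mathsf{Lip}(\alpha)$ with $\alpha>\frac12$, each term is finite by the boundedness result established earlier; otherwise the bound holds vacuously.

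For the second inequality I would simply note that $\ltp_{[N]}$ is an \textsf{LTP} system of exactly the form covered by Corollary~\ref{thm:MIMObound}: it has constant state matrix $\Q$, and its input and output maps $\sum_{|k|\le N}\b_k e^{\i k\omega_0 t}$ and $\sum_{|k|\le N}\c_k^T e^{\i k\omega_0 t}$ are finite Fourier expansions of order $N$ in the sense of \eqref{eq:ff} (the sign convention in the exponent is immaterial, since it only permutes the Fourier indices). Its Petrov--Galerkin reduction through $\V,\W$ with $\W^T\V = \I$ is precisely the reduced model $\widetilde\ltp_{[N]}$ of the statement, since projection acts mode by mode: the $k$-th Fourier coefficients of the reduced input and output maps are $\W^T\b_k$ and $\c_k^T\V$. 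Corollary~\ref{thm:MIMObound} therefore applies verbatim, with $2N+1$ playing the role of the number of inputs and outputs, and gives
\begin{align*}
\left\|\ltp_{[N]}-\widetilde\ltp_{[N]}\right\|_{\H2} \le \sqrt{2N+1}\,\left\|\ltih_{[N]}-\widetilde{\ltih}_{[N]}\right\|_{\H2},
\end{align*}
with $\ltih_{[N]}$ and $\widetilde{\ltih}_{[N]}$ the associated \textsf{MIMO} \textsf{LTI} realizations named in the statement. Substituting this bound into the triangle-inequality estimate produces \eqref{eq:ub}.

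I do not expect a genuine obstacle; the content is essentially bookkeeping. The one point deserving a little care is checking that the ``associated \textsf{LTI} \textsf{MIMO} system'' assignment of Corollary~\ref{thm:MIMObound}, applied to $\ltp_{[N]}$ and to its projection $\widetilde\ltp_{[N]}$, returns exactly the pair $(\ltih_{[N]},\widetilde{\ltih}_{[N]})$ displayed in the statement --- that is, that stacking the Fourier coefficients $\b_{-N},\ldots,\b_N$ as the columns of the \textsf{MIMO} input matrix and $\c_{-N},\ldots,\c_N$ as the rows of the output matrix is compatible with reduction. This holds because Petrov--Galerkin projection multiplies the input matrix column by column on the left by $\W^T$ and the output matrix row by row on the right by $\V$, so the stacked matrices of the reduced \textsf{MIMO} system are $\W^T[\b_{-N},\ldots,\b_N]$ and $[\c_{-N},\ldots,\c_N]^T\V$, exactly as in $\widetilde{\ltih}_{[N]}$. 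Granting this identification, the two inequalities are immediate, completing the argument.
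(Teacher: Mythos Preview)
Your proposal is correct and matches the paper's intended approach exactly: the paper does not even supply a proof for this corollary, treating it as an immediate consequence of the triangle inequality together with Corollary~\ref{thm:MIMObound}, which is precisely the two-step argument you give. Your additional remark verifying that the associated \textsf{MIMO} system of the projected \textsf{LTP} model coincides with the projection of the associated \textsf{MIMO} system is a welcome clarification, but there is nothing further to add.
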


	
	A sketch of the proposed model reduction algorithm is given in Algorithm \ref{PropMeth}.
\begin{algorithm}
	\caption{An $\H2$-based \textsf{LTP} Model Reduction Algorithm} \label{PropMeth}
		Given an  \textsf{LTP} system of the form $\ltp = \sys{\Q}{\b(t)}{\c^T(t)}{\mathbf 0}$:
		\begin{enumerate}
			\item Truncate Fourier series $\displaystyle \b(t) \approx \sum_{k = -N}^N \b_k e^{\i k\omega_0 t}$ and $\displaystyle \c(t) \approx \sum_{k = -N}^N \c_k e^{\i k\omega_0 t}$. Define
			\begin{align*}
				\ltp_{[N]} = \sys{\Q}{\sum_{k = -N}^N \b_k e^{\i k\omega_0 t}}{ \sum_{k = -N}^N \c^T_k e^{\i k\omega_0 t}}{\mathbf 0}
			\end{align*}
			\item Construct the associated \textsf{MIMO} system,
			\begin{align*}
				\ltih_{[N]} = \sys{\Q}{[\b_{-N},\ldots,\b_N ]}{ [\c_{-N}, \ldots, \c_N]^T }{\mathbf 0}
			\end{align*}
			from the Fourier coefficients of $\ltp_{[N]}$.
			\item Use \textsf{IRKA} to find projection matrices, $\V$ with $\W$, $\W^T \V = \I$, that locally minimizes $\|\ltih_{[N]} - \widetilde \ltih_{[N]}\|_{\H2}$ for $r$-dimensional
			\begin{align*}
				\widetilde \ltih_{[N]} &= \sys{\W^T\Q\V}{\W^T [\b_{-N},\ldots,\b_N ]}{ [\c_{-N}, \ldots, \c_N]^T \V}{\mathbf 0}\\
				&= \sys{\widetilde \Q}{[\widetilde\b_{-N},\ldots,\widetilde\b_N ]}{ [\widetilde\c_{-N}, \ldots, \widetilde\c_N]^T }{\mathbf 0}.
			\end{align*}
			\item Construct approximating \textsf{LTP} system $\widetilde  \ltp_{[N]}$ from the coefficients of $\widetilde \ltih_{[N]}$
			\begin{align*}
				\widetilde  \ltp_{[N]} = \sys{\widetilde \Q}{\displaystyle \sum_{k = -N}^N \widetilde\b_k e^{\i k\omega_0 t}}{\displaystyle \sum_{k = -N}^N \widetilde\c^T_k e^{\i k\omega_0 t}}{\mathbf 0}
			\end{align*}
		\end{enumerate}
\end{algorithm}

\section{Numerical Results}

	We demonstrate the  proposed model reduction scheme on \serkan{three} examples: \caleb{(1) a heat equation with a moving point source,
	(2) a nonlinear transmission line \textsf{LTP} model, and 
	 (3) a constructed example, the structural model of component 1r (Russian service module) of the International Space Station.}
As most of the linear-time varying model reduction methods are  computationally challenging for the problems we would like to consider, we compare our method to 
the Linear Time Varying Balanced Truncation method of \cite{lang2015towards} and
 Proper Orthogonal Decomposition (\textsf{POD}) \cite{kunisch2001galerkin}, which remains computationally  tractable even for large-scale problems since it only requires a time-domain simulation.   
 \serkanlast{Steih and Urban \cite{steih2012space} introduce a space-time reduced basis method for time-periodic parametric partial differential equations. However, since we only consider nonparametric \textsf{LTP} systems here, for our purposes, it is enough to consider regular \textsf{POD}.
}
 
\serkan{ The proposed method as described in Algorithm \ref{PropMeth} applies \textsf{IRKA} to a system with $2N+1$ inputs and outputs. As mentioned briefly in Section \ref{sec:irkah2},  the convergence  of \textsf{IRKA} may slow down as the number of inputs and outputs increases. Thus, for modest $N$, one might expect that  \textsf{IRKA} in Step 3 of Algorithm \ref{PropMeth} may be slow to converge.
The residue correction step introduced by Beattie and Gugercin in \cite{beattie2012realization} has largely resolved this issue and improved the MIMO behavior of IRKA significantly; see \cite{beattie2012realization}.   For the problems studied here, we have found that the (regular) \textsf{IRKA} algorithm converged after a modest number of iterations even without the residue correction methodology of \cite{beattie2012realization}.}

	\subsection{Computation of the $\H2$  norm for error comparisons}  \label{sec:h2compute}
Even though Algorithm \ref{PropMeth} does not require computing the $\H2$ norm at any point, in order to provide a detailed comparison to the reader between the reduced models resulting from Algorithm \ref{PropMeth} and \textsf{POD}, we present the resulting $\H2$ error norms. However, computing the $\H2$ error of \textsf{LTP} systems is a nontrivial exercise and we discuss a practical implementation based on Zhou and Hagiwara, \cite{Zhou:2002tp}.
		
	To compute the $\H2$ norm of \caleb{an} \textsf{LTP} system $\ltp = \sys{\Q}{\b(t)}{\c^T(t)}{}$, construct a new system, $\mathfrak G_N = \sys{\mathfrak A_N}{\mathfrak B_N}{\mathfrak C_N}{}$ from a finite number of Floquet-Fourier coefficients $\{\b_i\}_{-N}^N$ and $\{\c_i\}_{-N}^N$ where

	{\small
		\begin{equation}
		\label{eq:ZhouHagiwaraMIMO}
		\mathfrak C_N = \begin{bmatrix}
 			\c_{-N}^T & & & & 0\\
 			\vdots & \ddots\\
 			\c_0^T & & \c_{-N}^T\\
 			\vdots & \ddots & \vdots & \ddots\\
 			\c_N^T & \cdots & \c_0^T & \cdots & \c_{-N}^T\\
 			& \ddots & \vdots & \ddots & \vdots\\
 			& & \c_N^T & & \c_0^T\\
 			& & & \ddots & \vdots\\
 			0 & & & & \c_N^T 
 		\end{bmatrix},
				\begin{array}{l}
		\mathfrak A_N =  \mbox{blkdiag}(\caleb{\Q-\i N \omega_0 \I,\ldots,\Q+\i N \omega_0 \I}),\\ \\
		\mathfrak B_N =  [\b^T_{-N},\ldots,\b^T_{-1},\b^T_0,\b^T_1,\ldots,\b^T_N]^T.
		\end{array}
	\end{equation}	
    	}
    	
	\noindent Then we can approximate the \textsf{LTP} $\H2$ norm $\|\ltp\|_{\H2}$ with arbitrary accuracy as we keep more of the Floquet-Fourier coefficients.

	\begin{theorem}[Zhou \& Hagiwara, \cite{Zhou:2002tp}]
		\label{thm:h2norm_comp}
		If $\mathfrak G_N = \sys{\mathfrak A_N}{\mathfrak B_N}{\mathfrak C_N}{}$ where $\mathfrak A_N$, $\mathfrak B_N$ and $\mathfrak C_N$ are defined according to \eqref{eq:ZhouHagiwaraMIMO}, then
		\begin{align*}
			\lim_{N\to\infty} \mbox{trace}(\mathfrak B_N^* \mathcal V_N \mathfrak B_N) = \lim_{N\to\infty} \mbox{trace}(\mathfrak C_N \mathcal W_N \mathfrak C_N^*) = \|\ltp\|_{\H2}^2
		\end{align*}
		where $\mathcal V_N$ and $\mathcal W_N$ are, respectively, the solutions of the finite-dimensional Lyapunov equations
		\begin{align}
			\label{eq:Lyap}
			\mathfrak A_N^* \mathcal V_N + \mathcal V_N \mathfrak A_N + \mathfrak C_N \mathfrak C_N^* = 0 ~~\mbox{and}~~
			\mathfrak A_N \mathcal W_N + \mathcal W_N \mathfrak A_N^* + \mathfrak B_N \mathfrak B_N^* = 0.
		\end{align}
	\end{theorem}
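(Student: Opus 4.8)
The plan is to identify the finite-dimensional system $\mathfrak G_N$ as a genuine state-space realization of the \textsf{LTI} system whose transfer-function entries are exactly the subsystems of the Fourier-truncated \textsf{LTP} system $\ltp_{[N]}$, then to combine the classical Gramian formula for the \textsf{LTI} $\H2$ norm with the subsystem decomposition \eqref{eq:LTIH2sub}, and finally to let $N\to\infty$. Throughout I assume, as elsewhere in the paper, that $\Q$ is Hurwitz and that the entries of $\b(t),\c(t)$ lie in $\mathsf{Lip}(\alpha)$ with $\alpha>\frac12$.

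First I would unwind the block structure in \eqref{eq:ZhouHagiwaraMIMO}. Indexing the $2N+1$ diagonal blocks of $\mathfrak A_N$ and the blocks of the stacked column $\mathfrak B_N$ by $j\in\{-N,\dots,N\}$, and the $4N+1$ block rows of the Toeplitz matrix $\mathfrak C_N$ by $p\in\{-2N,\dots,2N\}$ (so the $(p,j)$ block of $\mathfrak C_N$ is $\c_{p-j}^T$, present exactly when $|p-j|\le N$), a direct — if notation-heavy — computation of $\mathfrak G_N(s)=\mathfrak C_N[s\I-\mathfrak A_N]^{-1}\mathfrak B_N$ identifies its $p$-th scalar output channel with the $p$-th subsystem of $\ltp_{[N]}$ (one must keep track of the sign conventions in the Fourier expansions, but since $\|\mathfrak G_N\|_{\H2}$ is invariant under permuting and conjugate-reflecting the channels, these are immaterial). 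Since $\mathfrak G_N$ has a single input, $\|\mathfrak G_N\|_{\H2}^2$ splits as the sum over its output channels of their squared $\H2$ norms, so by \eqref{eq:LTIH2sub} (equivalently Theorem \ref{thm:h2innerproduct}) we obtain $\|\mathfrak G_N\|_{\H2}^2=\|\ltp_{[N]}\|_{\H2}^2$.

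Next, the spectrum of $\mathfrak A_N$ consists of the shifts $\lambda_j(\Q)+\i m\omega_0$, all with real part $\max_j\mathrm{Re}\,\lambda_j(\Q)<0$, so $\mathfrak G_N$ is a stable \textsf{LTI} system and the standard Gramian representation of the $\H2$ norm applies: writing the impulse response as $\mathfrak g(t)=\mathfrak C_N e^{\mathfrak A_N t}\mathfrak B_N$ and using $\|\mathfrak G_N\|_{\H2}^2=\int_0^\infty\mathrm{trace}(\mathfrak g(t)^*\mathfrak g(t))\,dt$, one recognizes $\int_0^\infty e^{\mathfrak A_N^*t}\mathfrak C_N^*\mathfrak C_N e^{\mathfrak A_N t}\,dt$ — and its controllability analogue built from $\mathfrak B_N\mathfrak B_N^*$ — as the unique solutions $\mathcal V_N$ and $\mathcal W_N$ of the Lyapunov equations \eqref{eq:Lyap}, whence $\mathrm{trace}(\mathfrak B_N^*\mathcal V_N\mathfrak B_N)=\mathrm{trace}(\mathfrak C_N\mathcal W_N\mathfrak C_N^*)=\|\mathfrak G_N\|_{\H2}^2=\|\ltp_{[N]}\|_{\H2}^2$ for every $N$.

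It remains to show $\|\ltp_{[N]}\|_{\H2}^2\to\|\ltp\|_{\H2}^2$. Write both as $\sum_{k\in\Z}\|\hat g_k^{[N]}\|_{\H2}^2$ and $\sum_{k\in\Z}\|\hat g_k\|_{\H2}^2$, where $\hat g_k^{[N]}$ is a partial sum of $\hat g_k$. Reusing the estimate from the proof of the $\H2$-finiteness theorem above, $\|\hat g_k-\hat g_k^{[N]}\|_{\H2}\le\|[s\I-\Q]^{-1}\|_{\H2}\sum_{\mathrm{tail}}\|\c_{k-\ell}\|_2\|\b_\ell\|_2$, and since $\{\|\c_k\|_2\},\{\|\b_k\|_2\}\in\ell_1$ by Lemma \ref{lemma:fourierpwsmooth}, the convolution sequence $\psi_k:=\sum_\ell\|\c_{k-\ell}\|_2\|\b_\ell\|_2$ lies in $\ell_1\subset\ell_2$; hence each tail tends to $0$, so $\|\hat g_k^{[N]}\|_{\H2}^2\to\|\hat g_k\|_{\H2}^2$ for each fixed $k$, while the same estimate (now discarding no terms) gives the $N$-uniform bound $\|\hat g_k^{[N]}\|_{\H2}^2\le\|[s\I-\Q]^{-1}\|_{\H2}^2\,\psi_k^2$ with $\{\psi_k^2\}\in\ell_1$. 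The dominated convergence theorem for series then yields $\sum_k\|\hat g_k^{[N]}\|_{\H2}^2\to\sum_k\|\hat g_k\|_{\H2}^2$, and passing to the limit in the per-$N$ identity of the previous step finishes the proof. The one genuinely delicate point is this limit interchange, and it is precisely where the $\mathsf{Lip}(\alpha)$, $\alpha>\frac12$ hypothesis enters — via Bernstein's theorem and the resulting $\ell_1$ (hence $\ell_2$) control of the subsystem norms, which supplies the dominating sequence — whereas the block-structure computation of the first step, though routine, is the most bookkeeping-heavy part.
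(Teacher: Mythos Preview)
The paper does not prove this statement at all: Theorem~\ref{thm:h2norm_comp} is simply quoted from Zhou and Hagiwara \cite{Zhou:2002tp} and used as a computational tool in Section~\ref{sec:h2compute}, with no argument supplied. So there is no ``paper's own proof'' to compare against.

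That said, your argument is sound and self-contained within the framework the paper has already set up. The three steps --- identifying the output channels of $\mathfrak G_N$ with the subsystems of the Fourier-truncated system $\ltp_{[N]}$ (up to an index reflection that leaves the $\H2$ norm invariant), invoking the standard observability/controllability Gramian formula for stable \textsf{LTI} systems, and then passing to the limit $N\to\infty$ via dominated convergence --- are all correct. Your use of the $\mathsf{Lip}(\alpha)$, $\alpha>\tfrac12$ hypothesis to obtain the $\ell_1$ bound $\psi_k=\sum_\ell\|\c_{k-\ell}\|_2\|\b_\ell\|_2$ (and hence the summable majorant $\psi_k^2$) is exactly the mechanism already exploited in the proof of the $\H2$-finiteness theorem, so the limiting step is fully justified in the paper's setting. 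The one place worth a line of extra care is the index-matching in step~1: with the paper's sign convention $s_\ell=s+\i\ell\omega_0$ and the blocks $\Q+\i j\omega_0\I$ in $\mathfrak A_N$, the $p$-th channel of $\mathfrak G_N$ is $\sum_j\c_{p-j}^T[(s-\i j\omega_0)\I-\Q]^{-1}\b_j$, which matches $\hat g_{-p}^{[N]}$ of the system with reflected Fourier coefficients; since $\b(t),\c(t)$ are real this reflection is conjugation and preserves each $\|\hat g_k\|_{\H2}$, so your remark that the discrepancy is ``immaterial'' is justified, but it would strengthen the write-up to say this explicitly rather than leave it as bookkeeping.
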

	
	We note that in practice the Lyapunov equations \eqref{eq:Lyap} can be enormous in size as $\mathfrak A_N \in \R^{Nn\times Nn}$ where $N$ is the number of Fourier coefficients retained and $n$ is the order of the system.
	However, due to the block diagonal structure of $\mathfrak A_N$, the dominant cost of the Lyapunov solver is the Schur decomposition of $\Q$, which can be computed once and reused for \caleb{$\Q - \i k \omega_0\I$}. 
	Then the back substitution in the Bartels-Stewart algorithm can be done on each block individually, avoiding the cost of an $Nn \times N n$ Schur decomposition.
	For description of the Bartels-Stewart algorithm, see Sorensen and Zhou \cite{sorensen2003direct}.
	We found this to be a critical step in making the $\H2$ norm computations feasible for illustrating the model reduction errors.
	
\serkanlast{In the three numerical examples below, to illustrate the formula \eqref{eq:ub}, we plot the $\mathcal{H}_2$ error and its upper bound. However, as the discussion above illustrates, computing the exact Fourier truncation error $\|\ltp-\ltp_{[N]}\|_{\H2}$ is not numerically feasible if $\ltp$ has an infinite Fourier expansion. Indeed, in the numerical example of Section \ref{sec:noda}, we do not even have full access to $\ltp$. Therefore, in Sections \ref{ex:heat} and  \ref{sec:noda}, we keep all the $N$ Fourier coefficient that we are able to obtain via a numerical simulation and treat this system approximately as $\ltp$, i.e., $\ltp \approx \ltp_{[N]}$.	This means that in Sections  \ref{ex:heat} and  \ref{sec:noda}, the $\mathcal{H}_2$  norm plots show the   $\mathcal{H}_2$ error  $\left\|\ltp_{[N]} - \widetilde  \ltp_{[N]}\right\|_{\H2}$ together with its upper bound 
			$ \sqrt{2N+1}\left\|\ltih_{[N]} - \widetilde{\ltih}_{[N]}\right\|_{\H2}$. On the other, the  numerical example of Section \ref{ex:iss} is constructed such that the \textsf{LTP} system  has only $N=5$ Fourier coefficients. Therefore, in that example we have $\ltp = \ltp_{[N]}$ and the norm computations are exact.}	

\subsection{\caleb{1D  Heat Model}} \label{ex:heat}

\caleb{
The following model is a modified version of Example 1 in \cite{lang2015towards} of the 1D heat equation,
\begin{align*}
	\frac{\partial z}{\partial t}(t,x) - \frac{\partial^2 z}{\partial x^2}(t,x) &= \delta(x- \xi(t))u(t), \qquad (t,x) \in (0,T) \times (0,1),\\
	z(t,0) = z(t,1) &= 0, \qquad\qquad\qquad\qquad t \in (0,T),\\
	z(0,x) &= 0, \qquad\qquad\qquad\qquad x \in (0,1),\\
	y(t) &= z(t,0.5), \quad\qquad\qquad t \in (0,T),
\end{align*}
\noindent with a moving point source where $\delta(t)$ is the Dirac delta function and $\xi(t)$ and $u(t)$ denote the heat source position and thermal flux respectively.
}
\caleb{
The PDE is discretized via a finite difference discretization with $2502$ equidistant grid points.
Due to the Dirichlet boundary condition the discretized system has $n=2500$ degrees of freedom.
Simulations are discretized in time via Backwards Euler with final time $T=100$ [sec] and $\Delta t = 1$ [sec].
To make the source term periodic, we choose $\xi(t) = 0.5 + 0.4 \sin(8\pi t / T)$.
}

\caleb{For this model we  include a comparison to \textsf{POD} and the Linear Time-Varying Balanced Truncation (\textsf{LTV BT}) method introduced by Lang et al.\ \cite{lang2015towards} 
This approach solves the two Lyapunov equations at every time step and produces different ROM trial and test spaces ($\V(t_i)$ and $\W(t_i)$) for every time step as well.
While \textsf{LTV BT} produces very accurate reduced models, solving many Lyapunov equations is  expensive.
The authors alleviate this issue with an iterative process that warm-starts the Lyapunov solver at each time step with the solution from the previous time step; however, the computational cost of \textsf{LTV BT} is still considerably larger than our approach as is demonstrated in the numerical results that follow.
It is important to note that our approach exploits periodicity of the state space representation, whereas \textsf{LTV BT} does not and indeed can handle a broader class of time-varying problems than we  consider here.}

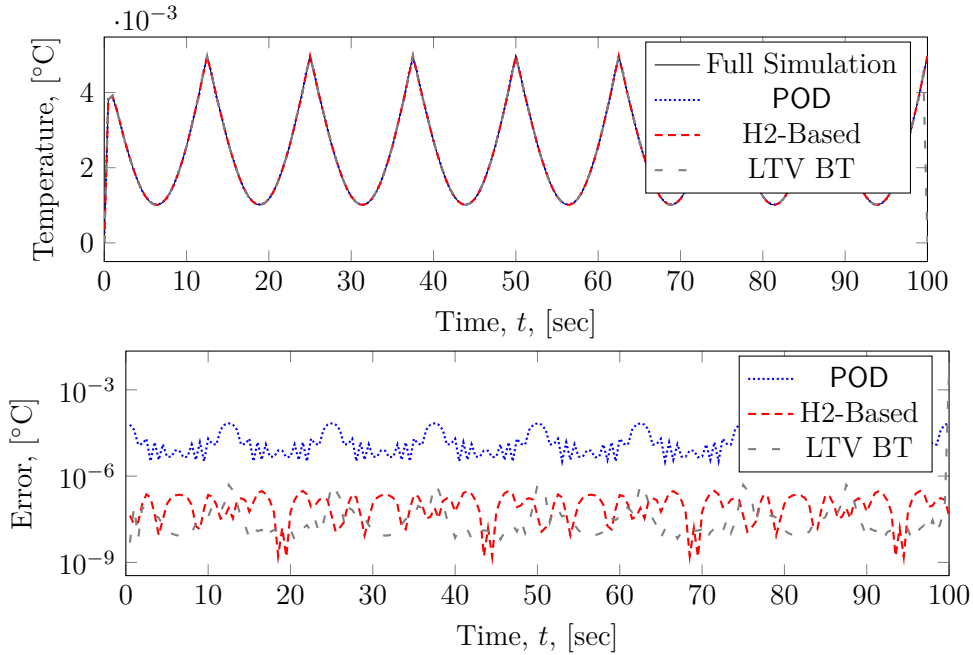
\begin{figure}[H]
	\centering
	\begin{tikzpicture}
		\begin{axis}[
			width = 0.9\textwidth,
			height = 0.2\textheight,
			xlabel = {Time, $t$, [sec]},
			ylabel = {Temperature, [$^\circ$C]},
			xmin = 0,
			xmax = 100,
			legend entries = {Full Simulation, \textsf{POD}, H2-Based, LTV BT},
			]
			\addplot+[mark = none,black] table [x index=0, y index=1, col sep=comma] {timesim.dat};
			\addplot+[mark = none,thick,blue,densely dotted] table [x index=0, y index=2, col sep=comma] {timesim.dat};
			\addplot+[mark = none,thick,red,densely dashed] table [x index=0, y index=3, col sep=comma] {timesim.dat};
			\addplot+[mark = none,thick,gray,loosely dashed] table [x index=0, y index=4, col sep=comma] {timesim.dat};
		\end{axis}
	\end{tikzpicture}
	\begin{tikzpicture}
		\begin{semilogyaxis}[
			width = 0.9\textwidth,
			height = 0.2\textheight,
			xlabel = {Time, $t$, [sec]},
			ylabel = {Error, [$^\circ$C]},
			xmin = 0,
			xmax = 100,
			legend entries = {\textsf{POD},H2-Based, LTV BT},
			]
			\addplot+[mark = none,densely dotted,thick] table [x index=0, y index=5, col sep=comma] {timesim.dat};
			\addplot+[mark = none,densely dashed,thick] table [x index=0, y index=6, col sep=comma] {timesim.dat};
			\addplot+[mark = none,gray,loosely dashed,thick] table [x index=0, y index=7, col sep=comma] {timesim.dat};
		\end{semilogyaxis}
	\end{tikzpicture}
	\caption{Transient simulation and error with input of the constant function $u(t) \equiv 1$ for $t \geq 0$. Full order model is of dimension $n=2500$.}
	\label{fig:heattimesim}
\end{figure}

\serkanlast{We train \textsf{POD} with the constant input function $u(t)=1$. Both \textsf{POD} and H2-Based reduced models use $r=14$ for dimension of the reduced system. On the other hand, since the model reduction bases vary at every time step,  \textsf{LTV BT} uses a varying reduced order throughout the simulation, $5 \leq r \leq 9$, depending on the time step. The \textsf{LTV BT} simulations take considerably longer to run ($1355$ [sec]) whereas \textsf{POD} and the proposed $\mathcal{H}_2$-Based approach, i.e.,  Algorithm \ref{PropMeth}, each took less than five seconds.
In Figure~\ref{fig:heattimesim}, we plot the output $y(t)$ (the top plot) and the output errors (the bottom plot) due to the three reduced models obtained via \textsf{POD}, \textsf{LTV BT}, and the proposed method (labeled as ``H2-Based").  The input $u(t)$ for these simulations is the same input function that was used to train \textsf{POD}. 
The first observation is that the input/output based approaches outperform \textsf{POD} as illustrated by the error plot in Figure~\ref{fig:heattimesim}. For this example, \textsf{LTV BT} and the H2-based proposed approach perform similarly. However,  one might expect  \textsf{LTV BT} to outperform the proposed approach in general since it uses time varying model reduction bases $\V(t_i)$ and $\W(t_i)$ at every time step; as opposed to the proposed approach where the model reduction bases are fixed. Therefore, it is encouraging that the proposed method is able to mimic the accuracy of  \textsf{LTV BT} for this example. 
}\caleb{Since \textsf{LTV BT} has time varying bases $\V(t_i)$ and $\W(t_i)$, the resulting reduced model does not allow $\mathcal{H}_2$ norm computations. Therefore, we compute the $\mathcal{H}_2$ error norms only for  \textsf{POD} and the H2-based method. Results in Figure~\ref{fig:heat_r_v_error} show the $\H2$ error as the dimension of the ROM increases.
To make the $\H2$ computation affordable, we have chosen a FOM with $n=100$ degrees of freedom. Once again the proposed method significantly outperforms \textsf{POD}. }
\begin{figure}[ht!]  
	\centering
	\begin{tikzpicture}
		\begin{semilogyaxis}[
			width = 0.9\textwidth,
			height = 0.3\textheight,
			xlabel = {Dimension of Reduced System, $r$},
			ylabel = {$\H2$ Error},
			xmin = 4,
			xmax = 24,
			legend entries = {H2-Based Error,\textsf{POD} Error,H2-Based Error Bd,\textsf{POD} Error Bd,},
			]
			\addplot+[mark = square,thick,blue] table [x index=0, y index=1, col sep=comma] {error_vs_r.dat};
			\addplot+[mark = x,thick,red] table [x index=0, y index=2, col sep=comma] {error_vs_r.dat};
			\addplot+[mark = square,dashed,thick,blue] table [x index=0, y index=3, col sep=comma] {error_vs_r.dat};
			\addplot+[mark = x,dashed,thick,red] table [x index=0, y index=4, col sep=comma] {error_vs_r.dat};
		\end{semilogyaxis}
	\end{tikzpicture}
	\caption{$\H2$ Error and \emph{a posteriori} error bounds of the H2-based method vs \textsf{POD}. Full order model is of dimension $n=100$.}
	\label{fig:heat_r_v_error}
\end{figure}
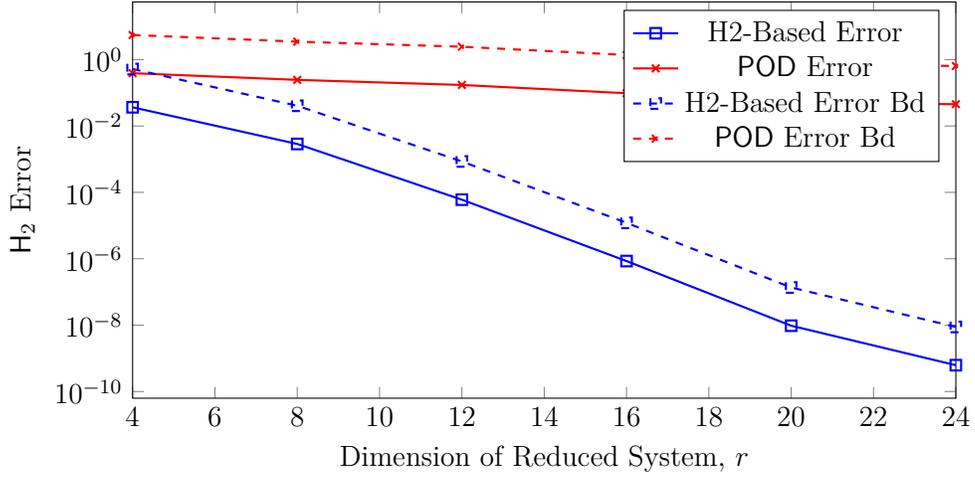

\subsection{Nonlinear Transmission Line} \label{sec:noda}

The following model is the ``Test Network 2" from Noda et al.\ \cite{noda2003harmonic}.
It represents an RLC circuit with nonlinear inductors to model the effect of saturable transformers.
The network is split into 10 sections, each consisting of 6 states.
\caleb{Therefore,} the full model has 60 states representing the current and voltages through each section.

In Figure \ref{fig:nodaschem}, $e = E_m \cos(\omega_0 t)$ where $\omega_0 = 2\pi 60$ \caleb{[rad/sec]} and $E_m = \sqrt{2/3} \times V_{p.u.} \times 500$ \caleb{[kV]}.
The circuit is supplied with an ``overvoltage" by a factor of $V_{p.u.} = 1.25$ to induce a noticeable saturation in the nonlinear inductors, $L_N$.
The current through $L_N$ is $i_N = \alpha\psi + \beta\psi^7$ where $\psi$ is the magnetic flux through the inductor and $\alpha$ and $\beta$ depend on the section.
For a table of the resistor, capacitor, and inductor values, see Figure 8 in Noda et al.\ \cite{noda2003harmonic}.
See Figure \ref{fig:nodaschem} for the nonlinear circuit schematic.

\begin{figure}[ht!]
	\centering
	\includegraphics[scale = 0.9]{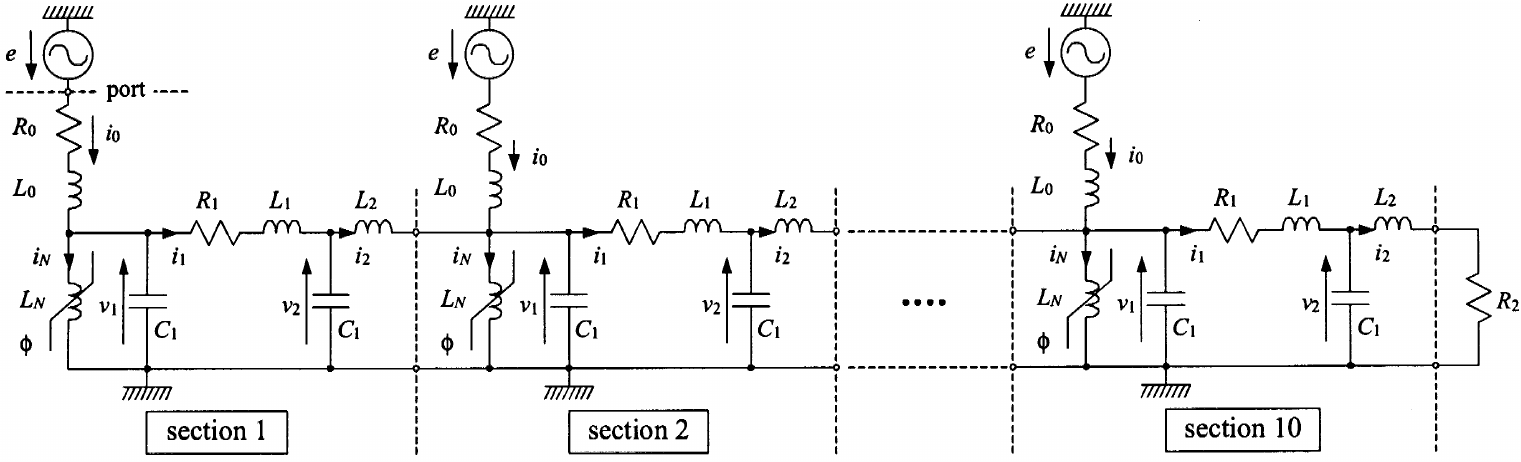}	
	\caption{Schematic for nonlinear circuit taken from Noda et al.\ \cite{noda2003harmonic}. Reproduced with permission.}
	\label{fig:nodaschem}
\end{figure}

The steady-state solution to the problem is \caleb{time-periodic}.
Noda et al.\ \cite{noda2003harmonic} are concerned with the transient deviations from the steady-state solution from perturbations of the input $e\mapsto e + u$.
\caleb{The voltage $e$ and current $i_0$ are the input and output of the dynamical system. As both quantities are degrees of freedom in the dynamical system, the input matrix $\b \in \R^{60}$ and output matrix $\c \in \R^{60}$ do not vary with time.}
\caleb{Therefore,} a linearization of the nonlinear dynamical system results in a time-periodic system of the form \eqref{eq:FOM} where $\A(t),\b,\c$ are $T$-periodic with $T = 1/60$ \caleb{[sec]} and $\x(t)\in \R^{60}$ represents the perturbation in the state from the \serkan{steady}-state solution and $u(t)\in \R$ represents the perturbation from $e$.

\serkanlast{Unlike the previous 1D Heat model, the full \textsf{LTP} models in this example and the next are  stiff, requiring many time-samples to resolve the numerical simulation, even with the implicit backward Euler scheme. This results in significantly more Lyapunov equations to solve increasing the cost of  \textsf{LTV BT} even further compared to the previous example. Therefore, for these last two examples, we only provide comparison to  \textsf{POD}. However, even though \textsf{LTV BT} might be computationally more intensive, we still accept it to provide very accurate reduced models due to the time-varying model reduction bases.}
To run \textsf{POD}, we generated simulations of the system when the control was the Heaviside function, $u(t) = H(10^{-3} - t)$ [kV].
After generating the numerical simulation of the state $\x(t)$, we collect the simulation into a snapshot matrix, $\X = [\bx_{t_1},\ldots,\bx_{t_m}]$ and perform an SVD to construct the subspace $\V \in \R^{n\times r}$.
The result of the reduced-order model generated according to \eqref{eq:ROM} with $\W = \V$ from above is labeled as \textsf{POD} in Figure \ref{fig:nodatimesim}.

\begin{figure}[H]
	\centering
	\begin{tikzpicture}
		\begin{axis}[
			width = 0.9\textwidth,
			height = 0.2\textheight,
			xlabel = {Time, $t$, [sec]},
			ylabel = {Current, [amps]},
			xmin = 0,
			xmax = 0.0166,
			legend entries = {Full Simulation, \textsf{POD}, H2-Based},
			]
			\addplot+[mark = none,black] table [x=time, y=y_og, col sep=comma] {c_timesim.dat};
			\addplot+[mark = none,thick,blue,densely dotted] table [x=time, y=y_pod, col sep=comma] {c_timesim.dat};
			\addplot+[mark = none,thick,red,densely dashed] table [x=time, y=y_krylov, col sep=comma] {c_timesim.dat};
		\end{axis}
	\end{tikzpicture}
	\begin{tikzpicture}
		\begin{semilogyaxis}[
			width = 0.9\textwidth,
			height = 0.2\textheight,
			xlabel = {Time, $t$, [sec]},
			ylabel = {Error, [amps]},
			xmin = 0,
			xmax = 0.0166,
			ymin = 0.001,
			ymax = 10,
			legend entries = {\textsf{POD},H2-Based},
			]
			\addplot+[mark = none,densely dotted,thick] table [x=time, y=y_pod_err, col sep=comma] {c_timesim.dat};
			\addplot+[mark = none,densely dashed,thick] table [x=time, y=y_krylov_err, col sep=comma] {c_timesim.dat};
		\end{semilogyaxis}
	\end{tikzpicture}
	\caption{Transient Simulation of linearized nonlinear circuit, $u(t) = H(10^{-3}-t)$ [kV] where $H$ denotes the Heaviside function.}
	\label{fig:nodatimesim}
\end{figure}
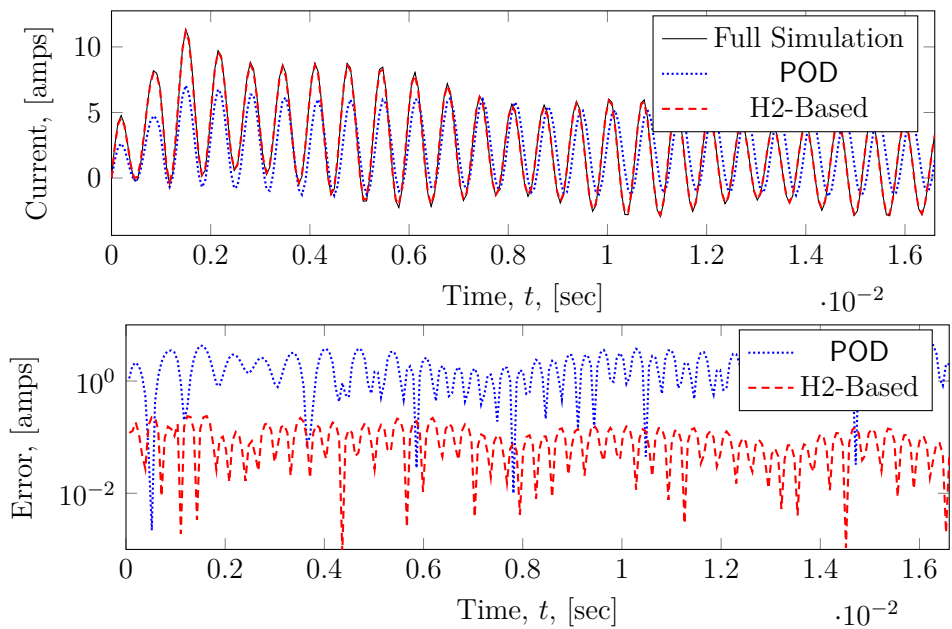

For the model reduction via Algorithm \ref{PropMeth}, we begin by performing a Floquet transformation on the periodic $\bA(t)$.
For this example, we have the \caleb{snapshots $\A(t_i)$ for} 256 uniformly spaced temporal points in the period, $t_i\in [0,1/60]$ \caleb{[sec]}. 
\caleb{Therefore,} the Floquet-Fourier transformation resulted in a state-space matrices of the form: $\Q\in \R^{60 \times 60}$, $\B,\mathbf C\in \R^{60\times 256}$.
\caleb{We preserve} all 256 Fourier modes of the input and output and used \textsf{IRKA} to construct the Petrov-Galerkin subspaces $\W$ and $\V$.

Figure~\ref{fig:nodatimesim} shows the outputs (the top plot) and the output errors (the bottom plot) in time domain simulations
due to two reduced models obtained via \textsf{POD} and the proposed method (labeled as ``H2-Based"), each with reduced order $r=10$. We also perform a similar comparison for a variety of reduced order sizes in Figure \ref{fig:noda_r_v_error}, measuring the $\H2$ error of the \textsf{LTP} systems.
Note that the \textsf{POD}-based reduced model becomes unstable for $r \geq 12$; hence, the $\H2$ error is infinite in these cases and are not plotted.

\begin{figure}[ht!]  
	\centering
	\begin{tikzpicture}
		\begin{semilogyaxis}[
			width = 0.9\textwidth,
			height = 0.3\textheight,
			xlabel = {Dimension of Reduced System, $r$},
			ylabel = {$\H2$ Error},
			xmin = 2,
			xmax = 28,
			legend entries = {\textsf{POD} Error,H2-Based Error,\textsf{POD} Error Bd,H2-Based Error Bd},
			]
			\addplot+[mark = square,thick,blue] table [x=r, y=ltpnormpod, col sep=comma] {c_error_vs_r.dat};
			\addplot+[mark = x,thick,red] table [x=r, y=ltpnormirka, col sep=comma] {c_error_vs_r.dat};
			\addplot+[mark = square,dashed,thick,blue] table [x=r, y=ltpnormpod_bd, col sep=comma] {c_error_vs_r.dat};
			\addplot+[mark = x,dashed,thick,red] table [x=r, y=ltpnormirka_bd, col sep=comma] {c_error_vs_r.dat};
		\end{semilogyaxis}
	\end{tikzpicture}
	\caption{$\H2$ Error and \emph{a posteriori} error \caleb{bound \eqref{eq:error_bd} of the} H2-based method vs \textsf{POD} for the linearization of the nonlinear circuit in from Noda et al.\ \cite{noda2003harmonic} in Figure \ref{fig:nodaschem}. }
	\label{fig:noda_r_v_error}
\end{figure}
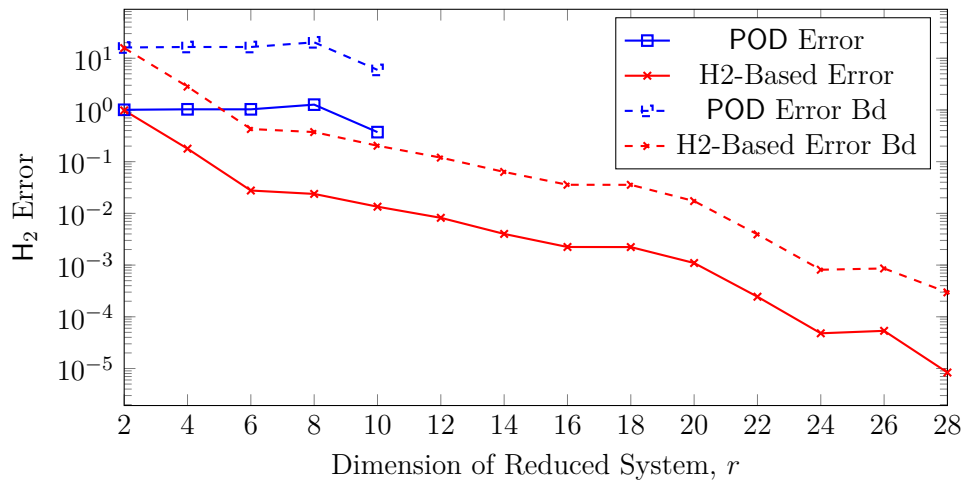
As Figures \ref{fig:nodatimesim} and \ref{fig:noda_r_v_error} illustrate the proposed  approach demonstrates superior performance to \textsf{POD}, in terms of stability and dynamical system error measured in the $\H2$ norm for \textsf{LTP} systems.
In terms of both the time domain simulation and the $\H2$  error norm, the error due to the proposed method is almost two orders of magnitude smaller than that of \textsf{POD}.
We note that this superior performance is obtained in the best case scenario for \textsf{POD} since the model reduction error is measured for the same input which was used to train  \textsf{POD}.
Still the $\H2$-based proposed method produces a significantly better reduced model. 
We note also that the error bound predicts the true error behavior well. 

\subsection{Structural Model of Component 1r (Russian service module) of the International Space Station} \label{ex:iss}
	We consider the 1r Russian service module  that has $270$ states, $3$ inputs and $3$ outputs.
	We make this model \textsf{LTP} by placing modulators on inputs $2$ and $3$. We do the same for outputs $2$ and $3$.
	Define $\mathbf B = \left [\begin{array}{ccc} \b_0 & \b_1 & \b_2 \end{array}\right ]$ to be the original input vector and $\mathbf C = [\c_0, \c_1, \c_2]^T$ to be the original output vector.
	Then we construct a \textsf{SISO} system by feeding the \caleb{input $u(t)$ into} two modulators with local oscillator frequencies of $\omega_0$ and $2\omega_0$ respectively,
	\begin{align*}
		\ltp = \sys{\Q}{\b_0 + \b_1\cos(\omega_0 t) + \b_2\cos(2\omega_0 t)}{\c_0 + \c_1\cos(\omega_0t) + \c_2\cos(2\omega_0t)}{\mathbf 0}.
	\end{align*}

	To see this abstracted into a diagram see Figure \ref{fig:3x3modulator}.
	\begin{figure}[!ht]
		\centering
		\includegraphics{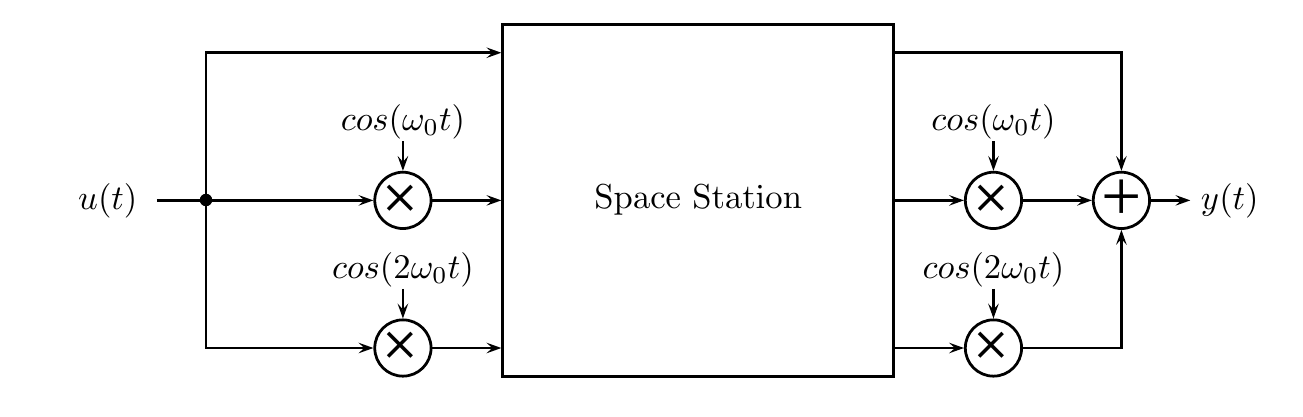}
		\caption{Schematic of the structural model of Component 1r(Russian service module) of the International Space Station with inputs and outputs modulated by local oscillator frequencies $\omega_0$ and $2\omega_0$.}
		\label{fig:3x3modulator}
	\end{figure}

	The system $\ltp$ is already in the Floquet-Fourier form of dimension $n = 270$ and $N = 2$ since the Fourier expansions of $\b(t)$ and $\c(t)$ have 5 nontrivial terms.
	We tried to reduce the model by  \textsf{POD}, using a test function of $u(t) = \sin(19.2875t)$ but each reduced model was unstable for reduced orders greater than or equal to 5. 
	The frequency $\omega = 19.2875$ \caleb{[rad/sec]} was chosen after determining that \caleb{it} excited many of the system harmonics.
	Since the resulting reduced systems were unstable, we omitted the \textsf{POD} simulation results from these comparisons.
	
	For $r = 30$, we run simulations of the full model and reduced model obtained via Algorithm \ref{PropMeth}.
	As shown in Figure \ref{fig:structuraltimesim}, the reduced \textsf{LTP} model is almost indistinguishable from the full \textsf{LTP} model.
	
	\begin{figure}[ht!]
		\centering
	\begin{tikzpicture}
		\begin{axis}[
			width = 0.9\textwidth,
			height = 0.32\textheight,
			xlabel = {Time, $t$, [sec]},
			legend entries = {Full Simulation, H2-Based},
			]
			\addplot+[mark = none,black] table [x=time, y=y_FOM, col sep=comma] {ss_timesim_FOM.dat};
			\addplot+[mark = none,thick,red,densely dashed] table [x=time, y=y_ROM, col sep=comma] {ss_timesim_ROM.dat};
		\end{axis}
	\end{tikzpicture}
		\caption{Time simulation of the modified space station structural model in Figure \ref{fig:3x3modulator}. Full-order ($n = 270$) and reduced-ordre ($r=30$) systems simulated with sinusoidal input, $u(t) = \sin(19.2875t)$.}
		\label{fig:structuraltimesim}
	\end{figure}
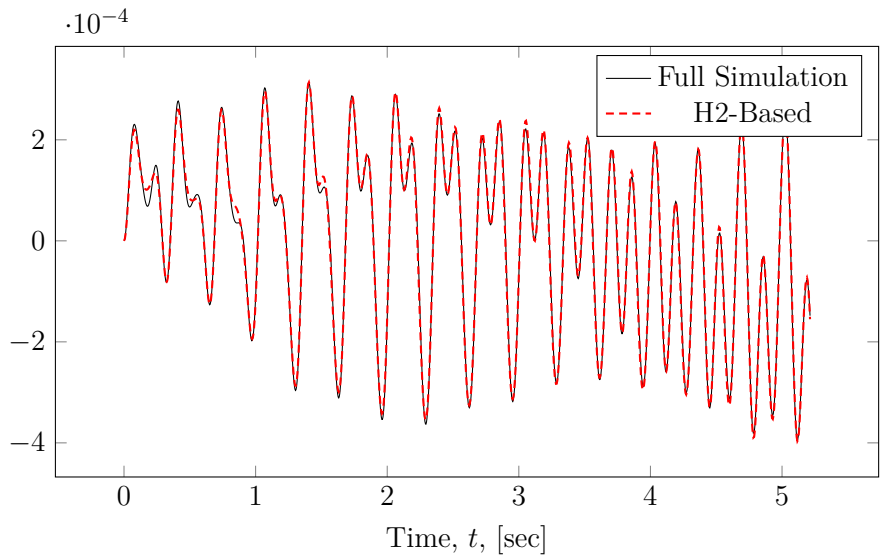
	
	As in the previous example, we construct reduced order models of dimensions $r = 4$ to $r = 80$ using Algorithm \ref{PropMeth} and compare their respective error system $\H2$ error norm $\|\ltp-\widetilde \ltp\|_{\H2}$ in Figure \ref{fig:structuralerrorvr}. The error bound accurately predicts the true error. 

	\begin{figure}[ht!]
		\centering
		\begin{tikzpicture}
			\begin{semilogyaxis}[
				width = 0.9\textwidth,
				height = 0.3\textheight,
				xlabel = {Dimension of Reduced System, $r$},
				ylabel = {$\H2$ Error},
				xmin = 4,
				xmax = 80,
				legend entries = {H2-Based Error,H2-Based Error Bd},
				]
				\addplot+[mark = square,thick,black] table [x=r, y=ltpnorm, col sep=comma] {ss_error_vs_r.dat};
				\addplot+[mark = square,dashed,thick,black] table [x=r, y=ltpnorm_bd, col sep=comma] {ss_error_vs_r.dat};
			\end{semilogyaxis}
		\end{tikzpicture}
		\caption{$\H2$ error and \emph{a posteriori} error bounds for the H2-based model reduction method for the modified space station model in Figure \ref{fig:3x3modulator}.}
		\label{fig:structuralerrorvr}
	\end{figure}
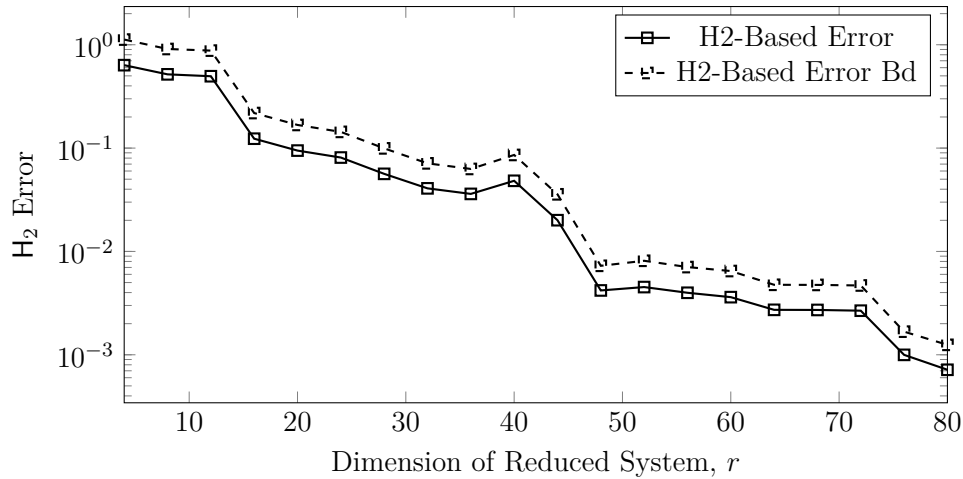
		
\section{Conclusions}

We develop a model reduction scheme for \textsf{LTP} systems by converting the model reduction 
problem into an analogous  \textsf{LTI} problem \caleb{and} employing existing model reduction techniques to the new problem.
Numerical results demonstrate the success of the proposed method.
Moreover, we extend the analysis of the certain notions of $\H2$-approximation established for  \textsf{LTI}  systems to \textsf{LTI}  systems.

In practice, current approaches for computing Floquet transformations do not scale well to large-scale systems, so this aspect may remain a bottleneck for effective model reduction for general \textsf{LTP} systems.  In a variety of circumstances, however, this step is not difficult or problematic.  We do not consider this aspect of the problem in the present work and 
the algorithm we \caleb{propose} here truncates the Floquet-Fourier coefficients, keeping $2N+1$ centered coefficients.
The error introduced by this step becomes arbitrarily small as we increase the number of coefficients conserved, $N\to \infty$.
Note that for any given $N$,  the coefficients that are kept need not be an optimal choice and
investigating what may constitute a better selection of coefficients would benefit any further refinement of this approach.

\section{Acknowledgements}
We thank to Dr.\ Taku Noda and \serkan{Dr.\ Jens Saak} for generously providing us with the MATLAB code used to generate the results in their paper \cite{noda2003harmonic} \serkan{and \cite{lang2015towards}, respectively.}
The work of C.\ Magruder was supported in part by the ExxonMobil Ken Kennedy Institute 2015/2016 High Performance Computing Graduate Fellowship.
The work of  S.\ Gugercin was supported in part by NSF through Grants DMS-1217156 and DMS-1522616, and 
by the Alexander von Humboldt Foundation.   The work of C.\ Beattie was supported in part by NSF through Grant DMS-1217156 and by the Einstein Foundation - Berlin.

\bibliographystyle{nMCM}

\end{document}